\def\P{{\mathbb P}}
\def\Z{{\mathbb Z}}
\def\E{{\mathbb E}}
\shorttitle{Long-range dependence of fractional processes} 
\begin{document}

\title{O\lowercase{n the} L\lowercase{ong-range} D\lowercase{ependence of} F\lowercase{ractional}\\ P\lowercase{oisson and} N\lowercase{egative} B\lowercase{inomial} P\lowercase{rocesses}} 

\authorone[Indian Institute of Technology Bombay]{A. Maheshwari} 
\authortwo[Indian Institute of Technology Bombay]{P. Vellaisamy}

\addressone{\small Department of Mathematics,
 Indian Institute of Technology Bombay, Powai, Mumbai 400076, INDIA. Email: aditya@math.iitb.ac.in} 
\addresstwo{\small Department of Mathematics,
 Indian Institute of Technology Bombay, Powai, Mumbai 400076, INDIA. Email: pv@math.iitb.ac.in} 
\begin{abstract}
We study the long-range dependence (LRD) of the increments of the fractional Poisson process (FPP), the fractional negative binomial process (FNBP) and the increments of the FNBP. We first point out an error in the proof of Theorem 1 of Biard and Saussereau \cite{biardapp} and prove that the increments of the FPP has indeed the short-range dependence (SRD) property, when the fractional index $\beta$ satisfies $0<\beta<\frac{1}{3}$. We also establish that the FNBP has the LRD property, while the increments of the FNBP possesses the SRD property. 

\end{abstract}

\keywords{Long-range dependence; fractional {P}oisson process; fractional negative binomial process} 

\ams{60G22}{60G55} 

\section{Introduction}
\noindent The long-range dependence (LRD) property of a stochastic model or a process has been widely studied in the literature. It has applications to several areas such 
as Internet data traffic modeling \cite{karag2004}, finance \cite{Ding1993}, econometrics \cite{Pagan1996}, hydrology \cite[p. 461-472]{DouTaqqu2003}, climate studies \cite{climate2006} and {\it etc}.  
Let $\{N_{\beta}(t) \}_{t \geq 0}$ be a fractional Poisson process (see \cite{lask}), where we call $\beta$ the fractional index. The LRD property of the fractional Poisson process (FPP) is proved in \cite{LRD2014}. Recently, Biard and Saussereau \cite{biardapp} showed that the fractional Poissonian noise (FPN) process $\{Z^{1}_{\beta}(n-1)\}_{n\geq 1}$, defined by
\begin{equation} \label{z-beta}
Z_{\beta}^{\delta}(t)=N_{\beta}(t+\delta,\lambda)-N_{\beta}(t,\lambda),~0<\beta<1,~\delta>0,~t\geq0,
\end{equation}
 has the LRD property. Note that there are several definitions for the LRD property of a stochastic process. The proof of Biard and Saussereau \cite{biardapp} uses the definition given in \cite{heyde97} which  is based on showing that $\lim_{m\rightarrow\infty}\Delta_{n}^{(m)}$ (see \eqref{def-lrd-eq}) is infinite. Unfortunately, there is a mistake in the proof. We  show that $\lim_{m\rightarrow\infty}\Delta_{n}^{(m)}$ is finite (see Theorem \ref{theorem-finite}), which disproves their claim that the
FPN has the LRD property. Using an alternate definition, we show that the FPN $\{Z_{\beta}^{\delta}(t)\}_{t\geq0}$ has the short-range dependence (SRD) property, when $\beta\in(0,\frac{1}{3})$.\\
  Let $\{Y(t)\}_{t \geq 0}$ be a gamma subordinator so that    $\{Q(t,\lambda)\}_{t\geq0}=\{N(Y(t),\lambda)\}_{t\geq0}$ is a  negative binomial process.
Very recently,
the fractional negative binomial process (FNBP) defined by $\{Q_{\beta}(t,\lambda)\}_{t\geq0} = \{N_{\beta}(Y(t),\lambda)\}_{t\geq0}$ is  studied in detail in \cite{fnbpfp}. We here prove that the FNBP has the LRD property. Let $\delta>0$ be fixed and define the increments of the FNBP as
 \begin{equation*}
  Q_{\beta}^{\delta}(t)=Q_{\beta}(t+\delta,\lambda)-Q_{\beta}(t,\lambda),~~~t\geq0,
 \end{equation*}
which we call  the fractional negative binomial noise (FNBN). We prove also that the FNBN has the SRD property.\\
The paper is organized as follows. In Section \ref{section:prelims}, some preliminary notations, results and definitions are stated. In Section \ref{section:FPN}, we discuss the proof of Theorem 1 of \cite{biardapp} and point out an error in their proof showing that FPN has the LRD property. We indeed prove that the FPN has the SRD property for the case $\beta\in(0,\frac{1}{3})$. In Section \ref{section:LRD-FNBP}, the LRD property of the FNBP and the SRD property of the FNBN are proved.

\section{Preliminaries}\label{section:prelims}
\noindent In this section, we introduce the notations and the results that will be used later. Let $\mathbb{Z}_{+}=\{0,1,\ldots\}$ be the set of nonnegative integers.
Let $\{N(t,\lambda)\}_{t\geq0}$ be a Poisson process with rate $\lambda>0$, so that
\begin{equation*}
p(n|t,\lambda)=\mathbb P[N(t,\lambda)=n]=\frac{(\lambda t)^{n}e^{-\lambda t}}{n!},~~~~n\in\Z_{+}. 
\end{equation*}
For $\alpha>0,~p>0$, let $\{Y(t)\}_{t\geq0}$ be a gamma subordinator, where $Y(t)\sim G(\alpha,pt)$ with density
\begin{equation}\label{gammaden}
g(y|\alpha,pt)=\frac{\alpha^{pt}}{\Gamma{(pt)}}y^{pt-1}e^{-\alpha y},~~~~y>0.	 
\end{equation}
We say a random variable $X$ follows a negative binomial distribution with parameters $\alpha>0$ and $0<\eta<1$, denoted by $\text{NB}(\alpha,\eta)$, if
\begin{equation}\label{nbpmf}
 \P[X=n]=\binom{n+\alpha-1}{n}\eta^{n}(1-\eta)^{\alpha},\,\,\,\,\,n\in\Z_{+}.
\end{equation}
When $\alpha$ is a natural number, then $X$ denotes the number of successes before the $\alpha$-th failure in a sequence of Bernoulli trials with success probability $\eta$.

\vspace{.2cm}
\subsection{Fractional Poisson and NB processes}
\begin{definition}
\noindent Let $0<\beta\leq1$. The fractional Poisson process (FPP) $\{N_{\beta}(t,\lambda)\}_{t\geq0}$, which is a generalization of the Poisson process $\{N(t,\lambda)\}_{t\geq0}$, is defined as the stochastic process whose $p_{_\beta}(n|t,\lambda)=\mathbb{P}[N_{\beta}(t,\lambda)=n]$ satisfies (see \cite{lask,main,mnv})
 \begin{flalign}
&&D^{\beta}_{t}p_{_{\beta}}(n|t,\lambda) &= -\lambda p_{_{\beta}}(n|t,\lambda)+\lambda p_{_{\beta}}(n-1|t,\lambda),\,\,\,\text{for } n\geq1,& \nonumber\\
 &&D^{\beta}_{t}p_{_{\beta}}(0|t,\lambda) &= -\lambda p_{_{\beta}}(0|t,\lambda)\nonumber,&
 \end{flalign}
  $\text{where } p_{_{\beta}}(n|0,\lambda)=1,\text{ if }n=0, \text{ and is zero if }n\geq1.$ Here, $D^{\beta}_{t}$ denotes the Caputo fractional derivative defined as
 \begin{equation*}
  D_{t}^{\beta}f(t)= \begin{cases} 
     \hfill \dfrac{1}{\Gamma(1-\beta)}\displaystyle\int\limits_{0}^{t}\frac{f'(s)}{(t-s)^{\beta}}ds, \hfill    &0<\beta<1 , \\ 
      f'(t), \,\,\,\,\,\,\,\,\,\,\, \beta=1,&
  \end{cases}
 \end{equation*}
where $f'$ denotes the derivative of $f.$ 
\end{definition}
\noindent The mean and the variance of the FPP are given by (see \cite{lask})
 \begin{align}
 \E [N_{\beta}(t,\lambda)] &= qt^{\beta},\label{fppmean} \\
 \mbox{Var}[N_{\beta}(t,\lambda)] &=qt^{\beta}\left[1+qt^{\beta}\left(\frac{\beta B(\beta, 1/2)}{2^{2\beta-1}}-1\right)\right],\nonumber
 \end{align}
 where $q=\lambda/\Gamma(1+\beta)$ and $B(a,b)$ denotes the beta function. An alternative form for Var[$N_{\beta}(t,\lambda)$] is given in \cite[eq. (2.8)]{BegOrs09} as
 \begin{equation}\label{alternative-fppvar}
  \mbox{Var}[N_{\beta}(t,\lambda)]=q t^{\beta}+\frac{(\lambda t^{\beta})^{2}}{\beta}\left(\frac{1}{\Gamma(2\beta)}-\frac{1}{\beta\Gamma^{2}(\beta)}\right).
 \end{equation}
 
\noindent It is also known that (see \cite{mnv}) when $0<\beta<1,$
 \begin{equation*}
 N_{\beta}(t,\lambda)\stackrel{d}{=}N(E_{\beta}(t),\lambda), 
 \end{equation*}
 where $\{E_{\beta}(t)\}_{t\geq0}$ is the inverse $\beta$-stable subordinator and is independent of $\{N(t,\lambda)\}_{t\geq0}$.\\
 Let $\{N(t,\lambda)\}_{t\geq0}$ be a Poisson process and $\{Y(t)\}_{t\geq0}$ be an independent gamma subordinator (see \eqref{gammaden}).  The negative binomial process $\{Q(t,\lambda)\}_{t\geq0}=\{N(Y(t),\lambda)\}_{t\geq0}$ is a subordinated Poisson process (see \cite{fell,kozubo}) with 
 \begin{equation*}
  \mathbb P[Q(t,\lambda)=n]=\delta(n|\alpha,pt,\lambda)=\binom{n+pt-1}{n}\eta^{n}(1-\eta)^{pt},
 \end{equation*}
 where $\eta=\lambda/(\alpha+\lambda)$. That is, $Q(t,\lambda)\sim \text{NB}(pt,\eta)$ for $t>0$, defined in \eqref{nbpmf}.
 \begin{definition}
 The fractional negative binomial process (FNBP) is defined as 
 \begin{equation*}
 Q_{\beta}(t,\lambda)=N_{\beta}(Y(t),\lambda), ~~~t\geq0, 
 \end{equation*}
 where $\{N_{\beta}(t,\lambda)\}_{t\geq0}$ is an FPP and is independent of $\{Y(t)\}_{t\geq0}$. 
 \end{definition}
  For more details and additional properties of the FNBP, the reader is referred to \cite{fnbpfp}.

\subsection{The LRD and the SRD property}
\noindent There are several definitions for the LRD and the SRD property of a stochastic process. We here present those definitions which will be used in this paper.\\
 The following definition is due to \cite{heyde97} and modified for non-centered process in \cite{biardapp}. Let $\{X_{m}\}_{m\geq1}$ be a discrete-time stochastic process. Define $S_{n}=\sum_{j=1}^{n}X_{j}$ and $\sigma_{n}^{2}=\sum_{j=1}^{n}\text{Var}[X_{j}]$, $n\geq1$. 
\begin{definition}\label{def-lrd}
 Let $\{X_{m}\}_{m\geq1}$ be a second order process (not necessarily stationary) with the block mean process 
 \begin{equation*}
  Y^{(m)}_{n}=\frac{S_{nm}-S_{(n-1)m}}{\sigma^{2}_{nm}-\sigma_{(n-1)m}^{2}},~n\geq1,
 \end{equation*}
 and $\Delta^{(m)}_{n}=\left(\sigma^{2}_{nm}-\sigma_{(n-1)m}^{2}\right)\left(\text{Var}[Y^{(m)}_{n}]\right)$.  The process $\{X_{m}\}_{m\geq1}$ has the LRD property if 
 \begin{equation*}
 \lim_{m\rightarrow\infty} \Delta^{(m)}_{n}=\infty.
 \end{equation*}

 \end{definition}
 \noindent We next present an alternate definition of the LRD and the SRD property (see \cite{ovi-lrd}).
 \begin{definition}\label{LRD-definition}
 Let $s>0$ be fixed and $t>s$. Suppose a stochastic process $\{X(t)\}_{t\geq0}$ has the correlation function Corr$[X(s),X(t)]$ that satisfies
 \begin{equation}\label{LRD-defn1}
c_1(s)t^{-d}\leq\text{Corr}[X(s),X(t)]\leq c_2(s)t^{-d},
 \end{equation}
 for large $t$, $d>0$, $c_1(s)>0$ and $c_2(s)>0$. In other words, 
 \begin{equation}\label{LRD-defn2}
\lim\limits_{t\to\infty}\frac{\text{Corr}[X(s),X(t)]}{t^{-d}}=c(s),
 \end{equation}for some $c(s)>0$ and $d>0.$  We say $\{X(t)\}_{t\geq0}$ has the LRD property if $d\in(0,1)$  and has the SRD property if $d\in(1,2)$.
 \end{definition}
 Note that \eqref{LRD-defn1} and \eqref{LRD-defn2} are equivalent and imply that Corr$[X(s),X(t)]$ behaves like $t^{-d}$, for large $t$.

\vspace{-.3cm}
\section{Dependence structure for the fractional Poisson process}\label{section:FPN}

\noindent First we require the following result (see \cite[Lemma 2]{biardapp}) which gives the factorial moments of the increments of the FPP. For simplicity, the parameter $\lambda$ is suppressed in $\{N_{\beta}(t,\lambda)\}_{t\geq0}$ and $\{Q_{\beta}(t,\lambda)\}_{t\geq0}$, when no confusion arises.
\begin{lemma}\label{factorial-moments-fpp-lemma}
 Let $0\leq s\leq t$ and $q= \lambda/ \Gamma(1+ \beta).$ Then
 \begin{align}\label{factorial-moments-fpp-eq}
  \mathbb{E}\big[(N_{\beta}(t)-N_{\beta}(s)) (N_{\beta}(t)-N_{\beta}(s)-1)\big]=2\beta q^{2}\int_{s}^{t}(t-r)^{\beta}r^{\beta-1}dr.
 \end{align}
\end{lemma}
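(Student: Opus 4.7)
My approach is to exploit the subordination representation $N_{\beta}(t)\stackrel{d}{=} N(E_{\beta}(t))$ stated earlier in the paper, where $\{E_{\beta}(t)\}_{t\geq 0}$ is the inverse $\beta$-stable subordinator, independent of $\{N(t,\lambda)\}_{t\geq 0}$. Since $E_{\beta}$ has nondecreasing paths, conditional on $(E_{\beta}(s),E_{\beta}(t))=(u,v)$ with $u\le v$, the increment $N(v)-N(u)$ is Poisson with parameter $\lambda(v-u)$, by the independent and stationary increments of the standard Poisson process and its independence from $E_{\beta}$.

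Applying the elementary identity $\mathbb{E}[X(X-1)]=\mu^{2}$ for $X\sim\mathrm{Poisson}(\mu)$ together with the tower property, I would obtain
$$\mathbb{E}\bigl[(N_{\beta}(t)-N_{\beta}(s))(N_{\beta}(t)-N_{\beta}(s)-1)\bigr]=\lambda^{2}\,\mathbb{E}\bigl[(E_{\beta}(t)-E_{\beta}(s))^{2}\bigr],$$
thereby reducing the lemma to the computation of the second moment of an increment of the inverse stable subordinator.

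For this final step, I would invoke the known integral representation of the covariance of $E_{\beta}$: for $0\le s\le t$,
$$\mathbb{E}[E_{\beta}(s)E_{\beta}(t)]=\frac{\beta}{\Gamma(1+\beta)^{2}}\int_{0}^{s}r^{\beta-1}\bigl[(t-r)^{\beta}+(s-r)^{\beta}\bigr]\,dr,$$
which in particular yields $\mathbb{E}[E_{\beta}(u)^{2}]=\frac{2\beta}{\Gamma(1+\beta)^{2}}\int_{0}^{u}r^{\beta-1}(u-r)^{\beta}\,dr$ (one checks via the beta function that this equals $2u^{2\beta}/\Gamma(1+2\beta)$, the standard formula). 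Expanding $\mathbb{E}[(E_{\beta}(t)-E_{\beta}(s))^{2}]=\mathbb{E}[E_{\beta}(t)^{2}]-2\mathbb{E}[E_{\beta}(s)E_{\beta}(t)]+\mathbb{E}[E_{\beta}(s)^{2}]$, the two integrals over $[0,s]$ involving $(s-r)^{\beta}$ cancel exactly, while the $(t-r)^{\beta}$ pieces combine into $\int_{s}^{t}r^{\beta-1}(t-r)^{\beta}\,dr$. Multiplying by $\lambda^{2}$ and recognizing $q^{2}=\lambda^{2}/\Gamma(1+\beta)^{2}$ then delivers \eqref{factorial-moments-fpp-eq} on the nose. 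The only mildly delicate point is the bookkeeping of this cancellation; once the covariance formula for $E_{\beta}$ is taken as given, everything else is routine manipulation with beta-type integrals.
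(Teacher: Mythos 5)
The paper does not prove this lemma at all: it is imported verbatim as Lemma 2 of Biard and Saussereau \cite{biardapp}, so there is no internal proof to compare against. Your argument is correct and gives a clean, self-contained derivation. The reduction $\mathbb{E}\big[(N_{\beta}(t)-N_{\beta}(s))(N_{\beta}(t)-N_{\beta}(s)-1)\big]=\lambda^{2}\,\mathbb{E}\big[(E_{\beta}(t)-E_{\beta}(s))^{2}\big]$ is valid, with the one caveat that it needs the subordination identity $N_{\beta}(\cdot)\stackrel{d}{=}N(E_{\beta}(\cdot))$ at the level of finite-dimensional distributions (not just one-dimensional marginals, which is all the paper explicitly writes down); that stronger statement is standard and is already implicitly used by the paper when it quotes the covariance formula \eqref{lrd-fpp}. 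Your quoted integral representation $\mathbb{E}[E_{\beta}(s)E_{\beta}(t)]=\frac{\beta}{\Gamma(1+\beta)^{2}}\int_{0}^{s}r^{\beta-1}[(t-r)^{\beta}+(s-r)^{\beta}]\,dr$ is exactly equivalent to the Leonenko et al.\ covariance formula that the paper itself cites (rewrite $s^{2\beta}B(\beta,1+\beta)$ and $t^{2\beta}B(\beta,1+\beta;s/t)$ as integrals over $[0,s]$), and the cancellation you describe does work out: the two $(s-r)^{\beta}$ integrals cancel and the $(t-r)^{\beta}$ integrals over $[0,t]$ and $[0,s]$ combine into $\int_{s}^{t}(t-r)^{\beta}r^{\beta-1}\,dr$, yielding precisely $2\beta q^{2}\int_{s}^{t}(t-r)^{\beta}r^{\beta-1}\,dr$. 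So the proof is sound; it simply trades one external input (the lemma itself) for another (the second-moment structure of the inverse stable subordinator), which is a reasonable and arguably more transparent foundation since the latter is already in the paper's bibliography.
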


\noindent  Note that for the FPN $\{Z^1_{\beta}(n-1)\}_{n\geq1}$, where $Z^1_{\beta}(n-1)=
N_{\beta}(n)-N_{\beta}(n-1)$ (see \eqref{z-beta}), 
 \begin{equation}\label{def-lrd-eq}
\Delta^{(m)}_{n}=\frac{\text{Var}[N_{\beta}(nm)-N_{\beta}((n-1)m)]}{\sum_{j=(n-1)m+1}^{nm}\text{Var}[N_{\beta}(j)-N_{\beta}(j-1)]}.
 \end{equation}
\noindent Biard and Saussereau \cite{biardapp} computed $\Delta_{n}^{(m)}$ for the FPN $\{Z^1_{\beta}(n-1)\}_{n\geq1}$ and showed that $\lim_{m\rightarrow\infty}\Delta_{n}^{(m)}$ is infinite.  We next show that $\lim_{m\rightarrow\infty}\Delta_{n}^{(m)}$ is indeed finite.
It is convenient to use the notation $C(x,y)=x^y-(x-1)^y$.
\begin{theorem}\label{theorem-finite}
Let $0<\beta<1$ and $\{Z^1_{\beta}(n-1)\}_{n\geq1}$ be the FPN. Then $\lim_{m\rightarrow\infty}\Delta_{n}^{(m)}$ is finite.
 \begin{proof}
  Our proof starts with the observation that, for $0\leq s\leq t$,
  \begin{align}
   \int_{s}^{t}(t-r)^{\beta}r^{\beta-1}dr&\leq\int_{0}^{t}(t-r)^{\beta}r^{\beta-1}dr \nonumber \\
	&=t^{2\beta}\int_{0}^{1}(1-u)^{\beta+1-1}u^{\beta-1}du~~~~(\text{substituting }r=tu)\nonumber\\
   &=t^{2\beta}B(\beta,1+\beta).\label{integral-ubound}
  \end{align}
Note  $\beta-1<0$ and $0\leq r\leq t$ implies that $ t^{\beta-1}\leq r^{\beta-1}$. Therefore, we have 
\begin{align}
 \int_{s}^{t}(t-r)^{\beta}r^{\beta-1}dr&\geq\int_{s}^{t}(t-r)^{\beta}t^{\beta-1}dr \nonumber\\
  &=\frac{t^{\beta-1}(t-s)^{\beta+1}}{\beta+1}.\label{integral-lbound}
\end{align}
Substituting \eqref{integral-ubound} and \eqref{integral-lbound} into \eqref{factorial-moments-fpp-eq} yields
\begin{equation}\label{integral-bound}
 c t^{\beta-1}(t-s)^{\beta+1}\leq \mathbb{E}\big[(N_{\beta}(t)-N_{\beta}(s)) (N_{\beta}(t)-N_{\beta}(s)-1)\big] \leq  2dt^{2\beta},
\end{equation}
where $c=2\beta q^{2}/(\beta+1)$ and $d=\beta q^2 B(\beta,1+\beta)$. Consider now
\begin{align}
 \mathbb{E}\big[(N_{\beta}(nm)-N_{\beta}((n-1)m))^{2}\big]&=\mathbb{E}[(N_{\beta}(nm)-N_{\beta}((n-1)m))(N_{\beta}(nm)-N_{\beta}((n-1)m)-1)]\nonumber\\
 &~~~~~~~~+\mathbb{E}[(N_{\beta}(nm)-N_{\beta}((n-1)m))]\nonumber\\
 &\leq 2d(nm)^{2\beta}+q((nm)^{\beta}-((n-1)m)^{\beta})~(\text{using }\eqref{integral-bound} \text{ and } \eqref{fppmean})\nonumber\\
  &= \big(2dn^{2\beta}+qC(n,\beta)m^{-\beta}\big)m^{2\beta}.\label{num-1}
\end{align}
Using \eqref{num-1}, we have
\begin{align}
 \text{Var}\big[N_{\beta}(nm)-N_{\beta}((n-1)m)\big]&=\mathbb{E}[(N_{\beta}(nm)-N_{\beta}((n-1)m))^{2}]\nonumber\\
 &~~~~~~~-\left(\mathbb{E}[N_{\beta}(nm)-N_{\beta}((n-1)m)]\right)^{2}\nonumber\\
 &\leq \big(2dn^{2\beta}+qC(n,\beta)m^{-\beta}\big)m^{2\beta}\nonumber\\
 &~~~~~~~-q^{2}C^{2}(n,\beta)m^{2\beta}\nonumber~~(\text{using }\eqref{fppmean})\\
 &=\big(2dn^{2\beta}+qC(n,\beta)m^{-\beta}-q^{2}C^{2}(n,\beta)\big)m^{2\beta}\label{numerator}.
\end{align}
Similarly,  we  see that for $j\geq1$
\begin{align}
 \mathbb{E}[(N_{\beta}(j)-N_{\beta}(j-1))^{2}]&=\mathbb{E}\big[\big(N_{\beta}(j)-N_{\beta}(j-1)\big)\big(N_{\beta}(j)-N_{\beta}(j-1)-1\big)\big]\nonumber\\
  &~~~~~~~~+\mathbb{E}\big[(N_{\beta}(j)-N_{\beta}(j-1))\big]\nonumber\\
 &\geq cj^{\beta-1}(j-(j-1))^{\beta+1}+qC(j,\beta)~~(\text{using }\eqref{integral-bound}\text{ and }\eqref{fppmean})\nonumber\\
 &=cj^{\beta-1}+qC(j,\beta),\nonumber
\end{align}
which leads to
\begin{align}
 \text{Var}\big[N_{\beta}(j)-N_{\beta}(j-1)\big]&=\mathbb{E}\big[(N_{\beta}(j)-N_{\beta}(j-1))^{2}\big]-\big(\mathbb{E}[N_{\beta}(j)-N_{\beta}(j-1)]\big)^{2}\nonumber\\
 &\geq cj^{\beta-1}+qC(j,\beta)-q^{2}C^{2}(j,\beta)\nonumber\\
 &\geq cj^{\beta-1}+qC(j,\beta)-q^{2}C(j,2\beta),\label{denom-3}
\end{align}
since $(a-b)^{2}\leq a^{2}-b^{2}$, for $a\geq b\geq0$. Using \eqref{denom-3}, we have
\begin{align}
 \sum_{j=(n-1)m+1}^{nm}\text{Var}[N_{\beta}(j)-N_{\beta}(j-1)]&\geq \sum_{j=(n-1)m+1}^{nm}\Big(cj^{\beta-1}+qC(j,\beta)-q^{2}C(j,2\beta)\Big)\nonumber\\
 &\geq cm(nm)^{\beta-1}+q((nm)^{\beta}-((n-1)m)^{\beta})\nonumber\\
 &~~~~~~-q^{2}((nm)^{2\beta}-((n-1)m)^{2\beta}),\nonumber
 \shortintertext{using  $\sum_{j=l+1}^{k} C(j, \beta)= k^{\beta}- l^{\beta}$. Therefore, we get}
\sum_{j=(n-1)m+1}^{nm}\text{Var}[N_{\beta}(j)-N_{\beta}(j-1)] &\geq m^{2\beta}\left[\Big(cn^{\beta-1}\right.+\left.qC(n,\beta)\Big)m^{-\beta}-q^{2}C(n,2\beta)\right].\label{denominator}
\end{align}
From \eqref{numerator} and \eqref{denominator}, we conclude that
\begin{align}
 \Delta^{(m)}_{n}&=\frac{\text{Var}[N_{\beta}(nm)-N_{\beta}((n-1)m)]}{\sum_{j=(n-1)m+1}^{nm}\text{Var}[N_{\beta}(j)-N_{\beta}(j-1)]}\nonumber\\
 &\leq\frac{\big[2dn^{2\beta}+qC(n,\beta)m^{-\beta}-q^{2}C^{2}(n,\beta)\big]m^{2\beta}}{\big[\big(cn^{\beta-1}+qC(n,\beta)\big)m^{-\beta}-q^{2}C(n,2\beta)\big]m^{2\beta}}\nonumber\\
 &\stackrel{m\rightarrow\infty}{\longrightarrow}\frac{2dn^{2\beta}-q^{2}C^{2}(n,\beta)}{-q^{2}C(n,2\beta)}~~~~(\text{since}~m^{-\beta}\rightarrow 0,~\text{as }m\rightarrow\infty)\nonumber\\
 &=\frac{C^{2}(n,\beta)-(2dn^{2\beta}/q^{2})}{C(n,2\beta)}\leq \frac{C^{2}(n,\beta)}{C(n,2\beta)}\leq 1.\nonumber
 \end{align}
Since $\Delta^{(m)}_{n}\geq0$, we see that $\lim_{m\rightarrow\infty}\Delta^{(m)}_{n}\in[0,1]$  and hence the result follows.
\end{proof}
\end{theorem}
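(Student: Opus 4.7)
The plan is to bound the numerator and denominator of $\Delta_n^{(m)}$ separately and show that each behaves like $m^{2\beta}$, so the ratio has a finite limit as $m\to\infty$. I will start from Lemma \ref{factorial-moments-fpp-lemma}, which expresses the relevant second factorial moment as $2\beta q^{2}\int_{s}^{t}(t-r)^{\beta}r^{\beta-1}dr$, and get two-sided estimates on this integral.

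For the upper bound, I would extend the lower limit to $0$ and use the substitution $r=tu$ to turn the integral into a beta function, giving a bound of the form $d\, t^{2\beta}$. For the lower bound, I would exploit that $\beta-1<0$, so $r^{\beta-1}\geq t^{\beta-1}$ on $[s,t]$, producing a bound of the form $c\, t^{\beta-1}(t-s)^{\beta+1}$. Combined with the mean formula \eqref{fppmean} and the identity $\mathrm{Var}[X]=\mathbb{E}[X(X-1)]+\mathbb{E}[X]-(\mathbb{E}[X])^{2}$, these translate into upper/lower bounds on $\mathrm{Var}[N_{\beta}(t)-N_{\beta}(s)]$.

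Applied to the numerator with $s=(n-1)m$, $t=nm$, the upper estimate gives a quantity of order $m^{2\beta}$ with leading constant $2dn^{2\beta}$, plus lower-order terms involving $m^{-\beta}$. For the denominator, applying the lower estimate to each unit increment $N_{\beta}(j)-N_{\beta}(j-1)$ yields a term $cj^{\beta-1}+qC(j,\beta)-q^{2}C(j,2\beta)$, where I use $C^{2}(j,\beta)\leq C(j,2\beta)$ to convert the squared mean contribution. Summing over $j=(n-1)m+1,\ldots,nm$ telescopes the $C(j,\cdot)$ terms into $(nm)^{\beta}-((n-1)m)^{\beta}$ and $(nm)^{2\beta}-((n-1)m)^{2\beta}$, so the denominator also comes out of order $m^{2\beta}$.

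Taking the ratio, the factors of $m^{2\beta}$ cancel. As $m\to\infty$, all the $m^{-\beta}$ terms vanish, leaving a ratio of the form $(C^{2}(n,\beta)-2dn^{2\beta}/q^{2})/C(n,2\beta)$, which is a finite number depending only on $n$ and $\beta$; a quick check via $C^{2}(n,\beta)\leq C(n,2\beta)$ bounds it above by $1$, and nonnegativity of $\Delta_{n}^{(m)}$ bounds it below by $0$. The main subtlety will be sign-bookkeeping in the denominator: the lower bound I construct contains a \emph{negative} leading term $-q^{2}C(n,2\beta)m^{2\beta}$, so I must verify that the denominator estimate is still meaningful (i.e., that the full denominator exceeds its dominant negative summand, which is automatic since the actual denominator is positive). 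The only other delicate point is confirming that the telescoping identity $\sum_{j=l+1}^{k}C(j,\beta)=k^{\beta}-l^{\beta}$ and the inequality $C^{2}(j,\beta)\leq C(j,2\beta)$ both hold on the nose, after which the argument is just collecting powers of $m$.
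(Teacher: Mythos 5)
Your proposal reproduces the paper's argument essentially line for line --- the same two-sided integral bounds, the same variance identity, the same telescoping and the same limit computation --- but the point you yourself flag as ``the main subtlety'' is a genuine gap, and your parenthetical resolution does not close it. To pass from ``numerator $\le U_m$'' and ``denominator $\ge L_m$'' to $\Delta_n^{(m)}\le U_m/L_m$ you need $L_m>0$. The lower bound you construct is $L_m=\big[(cn^{\beta-1}+qC(n,\beta))m^{-\beta}-q^{2}C(n,2\beta)\big]m^{2\beta}$, which tends to $-\infty$ as $m\to\infty$; so for all large $m$ the claimed inequality would assert that the nonnegative quantity $\Delta_n^{(m)}$ is bounded above by a negative number, which is simply false, not merely ``still meaningful.'' Positivity of the \emph{true} denominator is beside the point: the argument needs a \emph{positive} lower bound for the denominator of the same order $m^{2\beta}$ as the numerator bound, and the bound you produce is not of that form.

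Moreover, the gap cannot be repaired within this strategy, because no such positive $m^{2\beta}$-order lower bound exists. Each unit-increment variance satisfies $\text{Var}[N_{\beta}(j)-N_{\beta}(j-1)]=2\beta q^{2}\int_{j-1}^{j}(j-r)^{\beta}r^{\beta-1}dr+qC(j,\beta)-q^{2}C^{2}(j,\beta)$, and using $r^{\beta-1}\le (j-1)^{\beta-1}$ on $[j-1,j]$ together with $C(j,\beta)=O(j^{\beta-1})$ shows this is $\Theta(j^{\beta-1})$; summing over $j$ gives a denominator of order $\Theta(m^{\beta})$ only. On the other hand, writing $N_{\beta}(t)=N(E_{\beta}(t),\lambda)$ and conditioning, the numerator equals $\lambda\E[D]+\lambda^{2}\text{Var}[D]$ with $D=E_{\beta}(nm)-E_{\beta}((n-1)m)$, and $\lambda^{2}\text{Var}[D]=q^{2}\big(2\beta\int_{n-1}^{n}(n-v)^{\beta}v^{\beta-1}dv-C^{2}(n,\beta)\big)m^{2\beta}$, whose coefficient is strictly positive because it is the variance of a nondegenerate increment of the inverse stable subordinator. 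Hence the numerator is genuinely $\Theta(m^{2\beta})$ and the ratio grows like $m^{\beta}$. You should be aware that the paper's own proof performs exactly the same division by the eventually negative $L_m$, so you have faithfully matched the published approach; but the step fails in both versions, and identifying why it fails is more valuable than reproducing it.
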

\begin{remark}
For $t\in \mathbb{Z}_+\backslash\{0\}$ and $0<h<1$, Biard and Saussereau (see \cite[Theorem 1]{biardapp}) showed that $\lim_{m\rightarrow\infty}\Delta^{(m)}_{t}$ is infinite using the following inequality in the proof of Theorem 1
\begin{equation*}
\int_{tm-m}^{tm}(tm-r)^{h}r^{h-1}dr=(tm)^{2h}\int_{1-1/t}^{1}(1-u)^{h}u^{h-1}du\geq (tm)^{2h}B(1+h,h),
\end{equation*}
which is unfortunately incorrect, since
\begin{equation*}
\int_{1-1/t}^{1}(1-u)^{h}u^{h-1}du\leq \int_{0}^{1}(1-u)^{h}u^{h-1}du=B(1+h,h).
\end{equation*}
\end{remark}

 The remainder of this section is devoted to the proof of the SRD property of the FPN $\{Z^{\delta}_{\beta}(t)\}_{t\geq0}$, for $0<\beta<\frac{1}{3}$. 
\begin{definition}
	Let $f(x)$ and $g(x)$ be positive functions. We say that $f(x)$ is asymptotically equal to $g(x)$, written as $f(x)\sim g(x)$, as $x$ tends to infinity, if 
	\begin{equation*}
	\lim\limits_{x\rightarrow\infty}\frac{f(x)}{g(x)}=1.
	\end{equation*}
\end{definition}

\begin{theorem} Let $0<\beta <\frac{1}{3}.$ Then
 the FPN $\{Z^{\delta}_{\beta}(t)\}_{t\geq0}$ has the SRD property.
 \end{theorem}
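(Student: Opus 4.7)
The plan is to identify a decay exponent $d\in(1,2)$ for which $\text{Corr}[Z^{\delta}_{\beta}(s),Z^{\delta}_{\beta}(t)]\sim c(s)\,t^{-d}$ as $t\to\infty$, thereby confirming SRD via Definition~\ref{LRD-definition}. First I would derive a closed form for $C(u,v):=\text{Cov}[N_{\beta}(u),N_{\beta}(v)]$ when $u\le v$. Expanding the factorial identity $N_{\beta}(v)(N_{\beta}(v)-1)=N_{\beta}(u)(N_{\beta}(u)-1)+2N_{\beta}(u)(N_{\beta}(v)-N_{\beta}(u))+(N_{\beta}(v)-N_{\beta}(u))(N_{\beta}(v)-N_{\beta}(u)-1)$, taking expectations, and invoking Lemma~\ref{factorial-moments-fpp-lemma} together with \eqref{fppmean}, one obtains $C(u,v)=qu^{\beta}+\beta q^2 u^{2\beta}B(\beta,\beta+1)+\beta q^2\int_0^u(v-r)^{\beta}r^{\beta-1}dr-q^2 u^{\beta}v^{\beta}$. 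By bilinearity, $\text{Cov}[Z^{\delta}_{\beta}(s),Z^{\delta}_{\beta}(t)]=C(s+\delta,t+\delta)-C(s+\delta,t)-C(s,t+\delta)+C(s,t)$, and after substitution the $v$-independent pieces cancel, leaving
\begin{multline*}
\text{Cov}[Z^{\delta}_{\beta}(s),Z^{\delta}_{\beta}(t)]=\beta q^2\int_s^{s+\delta}\bigl[(t+\delta-r)^{\beta}-(t-r)^{\beta}\bigr]r^{\beta-1}dr\\
-q^2\bigl((s+\delta)^{\beta}-s^{\beta}\bigr)\bigl((t+\delta)^{\beta}-t^{\beta}\bigr).
\end{multline*}

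\textbf{Asymptotics.} Next I would Taylor expand as $t\to\infty$: $(t+\delta-r)^{\beta}-(t-r)^{\beta}=\beta\delta(t-r)^{\beta-1}+O((t-r)^{\beta-2})$, with $(t-r)^{\beta-1}=t^{\beta-1}+(1-\beta)r\,t^{\beta-2}+O(t^{\beta-3})$, and similarly $(t+\delta)^{\beta}-t^{\beta}=\beta\delta t^{\beta-1}+\tfrac{\beta(\beta-1)}{2}\delta^2 t^{\beta-2}+O(t^{\beta-3})$. The crucial observation is that the leading $t^{\beta-1}$ coefficients of the integral piece and the product piece coincide and cancel exactly. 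Collecting the surviving $t^{\beta-2}$ contributions yields $\text{Cov}[Z^{\delta}_{\beta}(s),Z^{\delta}_{\beta}(t)]\sim c_1(s)\,t^{\beta-2}$ with $c_1(s)>0$ proportional to $\int_s^{s+\delta}r^{\beta}dr$. For the denominator, Lemma~\ref{factorial-moments-fpp-lemma} applied directly to $N_{\beta}(t+\delta)-N_{\beta}(t)$, together with the substitution $r=t+u$ in the resulting integral, gives $\text{Var}[Z^{\delta}_{\beta}(t)]\sim C_V\,t^{\beta-1}$ with $C_V>0$ (the squared-mean term is of smaller order $t^{2\beta-2}$ because $\beta<1$).

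\textbf{Conclusion and main obstacle.} Dividing and noting that $\text{Var}[Z^{\delta}_{\beta}(s)]$ is a positive constant yields $\text{Corr}[Z^{\delta}_{\beta}(s),Z^{\delta}_{\beta}(t)]\sim c(s)\,t^{(\beta-3)/2}$, so Definition~\ref{LRD-definition} is satisfied with $d=(3-\beta)/2$, which belongs to $(1,2)$ for every $\beta\in(0,1)$, establishing SRD. The delicate point, and the main obstacle, is the exact cancellation at order $t^{\beta-1}$ in the covariance expansion: bounding the two pieces of the covariance in absolute value separately produces only $|\text{Cov}|=O(t^{\beta-1})$, which paired with $\sqrt{\text{Var}}\sim t^{(\beta-1)/2}$ leaves a correlation estimate of order $t^{(\beta-1)/2}$ that does not decay fast enough to force $d>1$ for any $\beta>0$. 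The statement's restriction $\beta<1/3$ would emerge naturally if the covariance were controlled only by the cruder envelope $O(t^{2\beta-2})$, since then the corresponding exponent $d=(3-3\beta)/2$ lies in $(1,2)$ precisely when $\beta<1/3$; the sharper $t^{\beta-2}$ expansion sketched above in principle removes this restriction.
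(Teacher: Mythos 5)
Your proposal is correct and establishes the theorem, but by a genuinely different route from the paper, and the comparison is instructive. The paper quotes the covariance formula \eqref{lrd-fpp} together with the asymptotic \eqref{fbetaasym}, $F(\beta;s,t)\sim-\tfrac{\beta}{\beta+1}(s/t)^{\beta+1}$, substitutes it into the four-term difference to get $\text{Cov}[Z^{\delta}_{\beta}(s),Z^{\delta}_{\beta}(t)]\sim Kt^{-(\beta+2)}$, and hence obtains the decay exponent $d=3(\beta+1)/2$, which lies in $(1,2)$ only for $\beta<\tfrac13$. You instead reconstruct $\text{Cov}[N_{\beta}(u),N_{\beta}(v)]$ from Lemma \ref{factorial-moments-fpp-lemma}, reduce the increment covariance to the exact expression $\beta q^2\int_s^{s+\delta}[(t+\delta-r)^{\beta}-(t-r)^{\beta}]r^{\beta-1}dr-q^2((s+\delta)^{\beta}-s^{\beta})((t+\delta)^{\beta}-t^{\beta})$, and expand to second order. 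Your computation checks out: the $t^{\beta-1}$ terms cancel and one is left with $\text{Cov}\sim q^2\beta^2(1-\beta)\delta\bigl(\int_s^{s+\delta}r^{\beta}dr\bigr)t^{\beta-2}$, hence $d=(3-\beta)/2\in(1,\tfrac32)$ for every $\beta\in(0,1)$. The two answers disagree because \eqref{fbetaasym}, as written, is missing the factor $\beta t^{2\beta}$: since $\beta t^{2\beta}B(\beta,1+\beta;s/t)=\beta\int_0^s(t-r)^{\beta}r^{\beta-1}dr=(st)^{\beta}-\tfrac{\beta^2}{\beta+1}s^{\beta+1}t^{\beta-1}+O(t^{\beta-2})$, the correct expansion is $F(\beta;s,t)\sim-\tfrac{\beta^2}{\beta+1}s^{\beta+1}t^{\beta-1}$ (for example $\beta=\tfrac12$, $s=1$, $t=100$ gives $F=-1/60$, matching $-\tfrac{\beta^2}{\beta+1}t^{\beta-1}$ rather than $-\tfrac{\beta}{\beta+1}t^{-\beta-1}\approx-3.3\times10^{-4}$). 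So your exponent is the correct one; it implies the theorem as stated and shows the restriction $\beta<\tfrac13$ is not actually needed, whereas the paper's exponent inherits the error in \eqref{fbetaasym}. Your closing remark about where $\beta<\tfrac13$ ``would'' come from is a reasonable diagnosis, though the paper's own mechanism is the erroneous $t^{-(\beta+1)}$ rate rather than a crude $O(t^{2\beta-2})$ envelope.

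Two small points to tighten. First, in your expansion the remainder you write as $O((t-r)^{\beta-2})$ contributes at exactly the surviving order $t^{\beta-2}$, so it must be made explicit: it equals $\tfrac{\beta(\beta-1)}{2}\delta^2(t-r)^{\beta-2}+O(t^{\beta-3})$, and after integration against $\beta r^{\beta-1}$ it produces $\tfrac{\beta(\beta-1)}{2}\delta^2((s+\delta)^{\beta}-s^{\beta})t^{\beta-2}$, which is cancelled exactly by the corresponding $t^{\beta-2}$ term of the product piece; only after recording this cancellation does the coefficient reduce to the $\int_s^{s+\delta}r^{\beta}dr$ term. Second, your variance route via Lemma \ref{factorial-moments-fpp-lemma} gives the correct constant as well as the correct exponent: $\text{Var}[Z^{\delta}_{\beta}(t)]\sim\bigl(q\beta\delta+\tfrac{2\beta q^2\delta^{\beta+1}}{\beta+1}\bigr)t^{\beta-1}$. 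The paper's replacement of $B(\beta,1+\beta;t/(t+\delta))$ by $B(\beta,1+\beta)$ inside a term of size $t^{2\beta}$ discards a contribution of order $t^{\beta-1}$ and yields only the constant $q\beta\delta$; this is harmless for the exponent, and hence for the SRD conclusion, but worth noting if you quote $C_V$ explicitly.
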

 \begin{proof}

 Let $s,\delta \geq 0$ be fixed and $0\leq s+\delta\leq t$. We start with
 \begin{align}
 \text{Cov}[Z^{\delta}_{\beta}(s),Z^{\delta}_{\beta}(t)]&= \text{Cov}\left[N_{\beta}(s+\delta)-N_{\beta}(s),N_{\beta}(t+\delta)-N_{\beta}(t)\right]\nonumber\\
 &=\text{Cov}\left[N_{\beta}(s+\delta),N_{\beta}(t+\delta)\right]+\text{Cov}\left[N_{\beta}(s),N_{\beta}(t)\right]\nonumber\\
 &~~~~~~~~~~~-\text{Cov}\left[N_{\beta}(s+\delta),N_{\beta}(t)\right]-\text{Cov}\left[N_{\beta}(s),N_{\beta}(t+\delta)\right].\label{covariance-xbeta}
 \end{align}
\noindent It is known that (see \cite[p. 9]{LRD2014})
 \begin{equation}\label{lrd-fpp}
\text{Cov}\left[N_{\beta}(s),N_{\beta}(t)\right]=qs^{\beta}+q^{2}\left[\beta s^{2\beta}B(\beta,1+\beta)+F(\beta;s,t)\right],
 \end{equation}
 where $F(\beta;s,t)=\beta t^{2\beta}B(\beta,1+\beta;s/t)-(st)^{\beta}$ and $B(a,b;x)=\int_{0}^{x}u^{a-1}(1-u)^{b-1}du~\text{for}~ a>0,~b>0,$ is the incomplete beta function. The asymptotic expansion of $F(\beta;s,t)$ for fixed $s$ and for large $t$ (see \cite{LRD2014}) is given by
 \begin{align}
 F(\beta;s,t)&=\frac{-\beta}{\beta+1}\left(s/t\right)^{\beta+1}+O\left(\left(s/t\right)^{\beta+2}\right)\nonumber\\
 &\sim \frac{-\beta}{\beta+1}\left(s/t\right)^{\beta+1}.\label{fbetaasym}
 \end{align}
 Combining \eqref{lrd-fpp} with \eqref{covariance-xbeta}, we deduce that
 \begin{align}
 \text{Cov}[Z^{\delta}_{\beta}(s),Z^{\delta}_{\beta}(t)]&=q^{2}\left[F(\beta;s+\delta,t+\delta)+F(\beta;s,t)-F(\beta;s+\delta,t)-F(\beta;s,t+\delta)\right]\nonumber\\
 &\sim\frac{-\beta q^{2}}{\beta+1}\left[\left(\tfrac{s+\delta}{t+\delta}\right)^{\beta+1}+\left(\tfrac{s}{t}\right)^{\beta+1}-\left(\tfrac{s+\delta}{t}\right)^{\beta+1}-\left(\tfrac{s}{t+\delta}\right)^{\beta+1}\right]~(\text{using } \eqref{fbetaasym})\nonumber\\
 & =\frac{-\beta q^{2}}{\beta+1}\left((s+\delta)^{\beta+1}-s^{\beta+1}\right)\left((t+\delta)^{-(\beta+1)}-t^{-(\beta+1)}\right)\nonumber\\
&=\frac{-\beta q^{2}}{\beta+1}\left((s+\delta)^{\beta+1}-s^{\beta+1}\right)t^{-(\beta+1)}\left((1+\delta/t)^{-(\beta+1)}-1\right)\nonumber\\
&\sim \beta q^{2}\delta\left((s+\delta)^{\beta+1}-s^{\beta+1}\right)t^{-(\beta+2)}~(\text{using binomial expansion})\nonumber\\
&=Kt^{-(\beta+2)},\label{covapproxA}
 \end{align}
  where $K=\beta q^{2}\delta((s+\delta)^{\beta+1}-s^{\beta+1})$. Observe that
 \begin{align}
 \text{Var}[Z^{\delta}_{\beta}(t)]
 &=\text{Var}[N_{\beta}(t+\delta)]+\text{Var}[N_{\beta}(t)]-2\text{Cov}[N_{\beta}(t),N_{\beta}(t+\delta)].\nonumber
 \end{align}
Denote $R=\frac{\lambda^2}{\beta}\left(\tfrac{1}{\Gamma(2\beta)}-\tfrac{1}{\beta\Gamma^2(\beta)}\right)=2d-q^2$. Using \eqref{alternative-fppvar} and \eqref{lrd-fpp}, we get
 \begin{align}
 \text{Var}[Z^{\delta}_{\beta}(t)]&=q[(t+\delta)^{\beta}+t^{\beta}]+R[(t+\delta)^{2\beta}+t^{2\beta}]\nonumber\\
 &~~~~~~~-2\big[qt^{\beta}+q^{2}(\beta B(\beta,1+\beta)t^{2\beta}+F(\beta;t,t+\delta))\big]\nonumber\\
& = q[(t+\delta)^{\beta}-t^{\beta}]+R[(t+\delta)^{2\beta}+t^{2\beta}]-2dt^{2\beta}-2q^{2}F(\beta;t,t+\delta),\nonumber\\
&= q[(t+\delta)^{\beta}-t^{\beta}]+R[(t+\delta)^{2\beta}+t^{2\beta}]-2dt^{2\beta}\nonumber\\
&~~~~~~~-2q^{2}\beta (t+\delta)^{2\beta} B(\beta,1+\beta;t/(t+\delta))+2q^{2}(t(t+\delta))^{\beta}.\nonumber
\end{align}
Since $ B(\beta,1+\beta;t/(t+\delta))\sim B(\beta,1+\beta)$ for large $t$, we have
\begin{align}
 \text{Var}[Z^{\delta}_{\beta}(t)]&= q[(t+\delta)^{\beta}-t^{\beta}]+(R-2d)[(t+\delta)^{2\beta}+t^{2\beta}]+2q^{2}(t(t+\delta))^{\beta}\nonumber\\
&= q[(t+\delta)^{\beta}-t^{\beta}]-q^{2}[(t+\delta)^{2\beta}+t^{2\beta}]+2q^{2}(t(t+\delta))^{\beta}\nonumber\\
&= q[(t+\delta)^{\beta}-t^{\beta}]-q^{2}\big[(t+\delta)^{\beta}-t^{\beta}\big]^{2}\nonumber\\
&= qt^{\beta}[(1+\delta/t)^{\beta}-1]-q^{2}t^{2\beta}\big[(1+\delta/t)^{\beta}-1\big]^{2}
\nonumber\\
&\sim \beta \delta qt^{\beta-1}-(\beta\delta q)^{2}t^{2(\beta-1)}~~(\text{using binomial expansion})\nonumber\\
&\sim \beta \delta qt^{\beta-1}\label{varC}.
 \end{align}
 Using \eqref{covapproxA} and \eqref{varC}, we finally obtain the correlation function, for large $t$,
 \begin{align}
 \text{Corr}[Z^{\delta}_{\beta}(s),Z^{\delta}_{\beta}(t)]&=\frac{\text{Cov}[Z^{\delta}_{\beta}(s),Z^{\delta}_{\beta}(t)]}{\sqrt{\text{Var}[Z^{\delta}_{\beta}(s)]}\sqrt{\text{Var}[Z^{\delta}_{\beta}(t)]}}\nonumber\\
 &\sim t^{-\frac{3}{2}(\beta+1)}S,\nonumber
 \end{align}
 where $S=\frac{K}{\sqrt{\beta\delta q}\sqrt{\text{Var}[Z^{\delta}_{\beta}(s)]}}.$ Thus, the correlation function of the FPN process decays at the rate $t^{-3(\beta+1)/2}$.  Since $3(\beta+1)/2\in(1.5,2)$, for $0<\beta<\frac{1}{3}$,
the result follows.
 \end{proof}
\section{Dependence structure for fractional negative binomial process}\label{section:LRD-FNBP}
\noindent In this section, we investigate the LRD property of the FNBP $\{Q_{\beta}(t)\}_{t\geq0}$, studied in detail in \cite{fnbpfp}, and the SRD property of the FNBN $\{Q^\delta_{\beta}(t)\}_{t\geq0}$.
For that purpose, we first need  the following result from \cite{fnbpfp} regarding the mean, variance and autocovariance functions of the FNBP. 
 \begin{theorem}\label{Theorem-fnbp-dist-properties} The mean, variance and autocovariance functions of the FNBP $\{Q_{\beta}(t)\}_{t\geq0}$ are given by\\
   (i)$~~~\E[Q_{\beta}(t)] = q\dfrac{\Gamma(pt+\beta)}{\alpha^{\beta}\Gamma(pt)}=q\E[Y^{\beta}(t)]\sim q\left(\dfrac{pt}{\alpha}\right)^{\beta}= \left(\dfrac{p}{\alpha}\right)^{\beta}\mathbb{E}[N_{\beta}(t)]$, for large $t$,\\
   (ii)$~~\text{Var}[Q_{\beta}(t)]=q\E[Y^{\beta}(t)]\left(1-q\E[Y^{\beta}(t)]\right)+2d\E[Y^{2\beta}(t)]$,
   \begin{flalign}
 \text{(iii)}~~\text{Cov}[Q_{\beta}(s),Q_{\beta}(t)]&=q\E[Y^{\beta}(s)]+ d\E[Y^{2\beta}(s)] -q^{2}\E[Y^{\beta}(s)]\E[Y^{\beta}(t)]&&\nonumber \\&~~~~~
  +q^{2}\beta \mathbb{E}[Y^{2\beta}(t)B(\beta,1+\beta;Y(s)/Y(t))].\nonumber&
 \end{flalign}
 \end{theorem}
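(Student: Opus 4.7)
The plan is to condition on the gamma subordinator $\{Y(t)\}_{t\geq 0}$ and reduce every quantity to an expectation, over $Y$, of a known FPP moment. Since $\{N_\beta(t)\}_{t\geq 0}$ and $\{Y(t)\}_{t\geq 0}$ are independent and $Q_\beta(t)=N_\beta(Y(t))$, the tower property lets me evaluate the inner conditional moment from the formulas already collected in Section \ref{section:prelims}, and then integrate against the gamma law \eqref{gammaden}. The only gamma moment I need is $\E[Y^\beta(t)]=\Gamma(pt+\beta)/(\alpha^\beta\Gamma(pt))$ (and its $2\beta$ analogue), obtained by a one-line calculation with \eqref{gammaden}. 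Stirling's asymptotic for the ratio $\Gamma(pt+\beta)/\Gamma(pt)\sim (pt)^{\beta}$ then produces the large-$t$ form in (i).

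Part (i) is immediate: by \eqref{fppmean}, $\E[Q_\beta(t)\mid Y(t)]=qY^\beta(t)$, so $\E[Q_\beta(t)]=q\E[Y^\beta(t)]$, which gives both the exact expression and the asymptotic. For (ii), I will first rewrite \eqref{alternative-fppvar} in the compact form $\E[N_\beta(u)^2]=qu^\beta+2du^{2\beta}$, which hinges on the algebraic identity $R+q^2=2d$ with $R$ as defined in the proof of Theorem \ref{theorem-finite} and $d=\beta q^{2}B(\beta,1+\beta)$; this identity is a one-line manipulation of gamma functions using $q=\lambda/\Gamma(1+\beta)$. Conditioning on $Y(t)$ then gives $\E[Q_\beta(t)^2]=q\E[Y^\beta(t)]+2d\E[Y^{2\beta}(t)]$, and subtracting $(\E[Q_\beta(t)])^2=q^{2}(\E[Y^\beta(t)])^{2}$ rearranges into formula (ii).

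Part (iii) is the main obstacle and needs a joint second moment of the FPP. Combining \eqref{lrd-fpp} with \eqref{fppmean}, for $u\leq v$ one gets $\E[N_\beta(u)N_\beta(v)]=qu^{\beta}+q^{2}\beta u^{2\beta}B(\beta,1+\beta)+q^{2}\beta v^{2\beta}B(\beta,1+\beta;u/v)$, the $-q^{2}(uv)^{\beta}$ term in $F(\beta;u,v)$ being cancelled by the product of the means. Because $Y$ has nondecreasing sample paths, $Y(s)\leq Y(t)$ almost surely whenever $s\leq t$, so I may substitute $u=Y(s)$, $v=Y(t)$ and pass to the unconditional expectation (nonnegativity of the integrand and Fubini justify the interchange). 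Using $d=\beta q^{2}B(\beta,1+\beta)$ once more, the first two terms become $q\E[Y^\beta(s)]+d\E[Y^{2\beta}(s)]$, while the last survives as the stated $q^{2}\beta\E[Y^{2\beta}(t)B(\beta,1+\beta;Y(s)/Y(t))]$. Subtracting $\E[Q_\beta(s)]\E[Q_\beta(t)]=q^{2}\E[Y^\beta(s)]\E[Y^\beta(t)]$ yields (iii). The delicate point is purely bookkeeping: the incomplete beta contribution cannot be simplified further and must be carried through under the expectation, and one must invoke the a.s. monotonicity of $Y$ to apply \eqref{lrd-fpp} with $u=Y(s)\leq Y(t)=v$.
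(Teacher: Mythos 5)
Your proposal is correct. Note that the paper itself does not prove Theorem \ref{Theorem-fnbp-dist-properties}; it is quoted verbatim from \cite{fnbpfp}, so there is no in-paper proof to compare against. Your conditioning argument is the natural (and, as far as one can tell from the cited source's role here, the intended) derivation: part (i) follows from \eqref{fppmean} and the gamma moment $\E[Y^{\beta}(t)]=\Gamma(pt+\beta)/(\alpha^{\beta}\Gamma(pt))$; part (ii) from the compact form $\E[N_{\beta}^{2}(u)]=qu^{\beta}+2du^{2\beta}$, which indeed rests on the identity $R+q^{2}=2d$ that the paper itself records later as $R=2d-q^{2}$; and part (iii) from substituting $u=Y(s)\leq Y(t)=v$ (a.s.\ monotonicity of the subordinator) into the FPP joint moment extracted from \eqref{lrd-fpp}, where the $-q^{2}(uv)^{\beta}$ term of $F(\beta;u,v)$ cancels against the product of means, leaving exactly the stated incomplete-beta term under the expectation. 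All three computations check out, including the consistency check that (iii) with $s=t$ reduces to (ii) since $B(\beta,1+\beta;1)=B(\beta,1+\beta)$ and $d=\beta q^{2}B(\beta,1+\beta)$.
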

\noindent We need the following result also.
\begin{lemma}
Let $0<\beta<1$ and $a\geq b\geq 0$. Then,
\begin{equation}\label{appnedix-1}
(a-b)^\beta\geq a^\beta-b^\beta.
\end{equation}
\begin{proof}
Since $b\geq0$, we have
\begin{align*}
&~~~~~ x-b\leq x,~x\in(b,a)\\
&\Rightarrow(x-b)^{\beta-1}\geq x^{\beta-1} ~(\text{since }\beta <1)\\
&\Rightarrow\int_{b}^{a}(x-b)^{\beta-1}dx\geq \int_{b}^{a}x^{\beta-1}dx\\
&\Rightarrow (a-b)^{\beta}\geq a^{\beta}-b^{\beta},
\end{align*}
since $0<\beta<1.$ 
\end{proof}
\end{lemma}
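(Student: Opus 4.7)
The plan is to derive the claim from the subadditivity of $x\mapsto x^\beta$ on $[0,\infty)$, which is the standard manifestation of its concavity together with the vanishing at $0$. Setting $u=a-b\ge 0$ and $v=b\ge 0$, the target inequality $(a-b)^\beta\ge a^\beta-b^\beta$ becomes precisely $u^\beta+v^\beta\ge(u+v)^\beta$, so the whole task reduces to establishing this subadditivity on the nonnegative reals.

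For the subadditivity step I would normalize. In the nontrivial case $u+v>0$, let $s=u/(u+v)\in[0,1]$, so that $u^\beta+v^\beta=[s^\beta+(1-s)^\beta](u+v)^\beta$. Since $0<\beta<1$ and both $s$ and $1-s$ lie in $[0,1]$, we have $s^\beta\ge s$ and $(1-s)^\beta\ge 1-s$, because $x^{\beta-1}\ge 1$ whenever $x\in(0,1]$ and $\beta<1$. Adding these two bounds yields $s^\beta+(1-s)^\beta\ge 1$, and multiplying through by $(u+v)^\beta$ gives $u^\beta+v^\beta\ge(u+v)^\beta=a^\beta$, which is the desired conclusion.

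There is no serious obstacle; the only point worth flagging is that the lemma fails for $\beta>1$ (the reverse inequality holds there by the same subadditivity-vs-superadditivity dichotomy), so the hypothesis $0<\beta<1$ must be used, and in the route above it enters through $s^\beta\ge s$ on $[0,1]$. An equally clean alternative, closer in spirit to the calculus-style manipulations used elsewhere in the paper, is a one-line integral comparison: since $\beta-1<0$ and $x-b\le x$ for $x\in(b,a)$, one has $(x-b)^{\beta-1}\ge x^{\beta-1}$; integrating over $[b,a]$ and multiplying by $\beta$ gives $(a-b)^\beta\ge a^\beta-b^\beta$. I would pick whichever presentation meshes better with the surrounding notation, as both proofs are a handful of lines.
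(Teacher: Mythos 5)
Your proposal is correct. Your primary argument — substituting $u=a-b$, $v=b$ to recast the claim as subadditivity $u^\beta+v^\beta\ge(u+v)^\beta$, then normalizing by $u+v$ and using $s^\beta\ge s$ on $[0,1]$ — is a genuinely different route from the paper's, and it is complete: every step is justified, and the hypothesis $0<\beta<1$ enters exactly where you say it does. The paper instead proves the lemma by the integral comparison you mention as an alternative: from $x-b\le x$ on $(b,a)$ and $\beta-1<0$ it deduces $(x-b)^{\beta-1}\ge x^{\beta-1}$, integrates over $[b,a]$, and multiplies by $\beta$. The two arguments are of comparable length; yours makes the structural content (concavity plus vanishing at $0$ implies subadditivity) explicit and generalizes immediately to any such function, while the paper's stays within the elementary integral manipulations used throughout Section 3 and avoids introducing the normalization variable. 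Either presentation is acceptable; your observation that the inequality reverses for $\beta>1$ is a useful sanity check that the paper does not make.
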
  
The following is the key result used in the proof of the main result.
 \begin{lemma}\label{Lemma-asym}
Let $0<\beta<1$ and  $0< s< t $, where $s$ is fixed. Then, as $t$ tends to infinity,
(i) The asymptotic expansion of $\E[Y^{\beta}(s)Y^{\beta}(t)]$   is 
 \begin{align}
 \E\left[Y^{\beta}(s)Y^{\beta}(t)\right] \sim\E\left[Y^{\beta}(s)\right]\E\left[Y^{\beta}(t-s)\right]\label{gamma-beta-joint}.
 \end{align}
(ii) The asymptotic expansion of $\beta \mathbb{E}\left[Y^{2\beta}(t)B(\beta,1+\beta;Y(s)/Y(t))\right]$  is 
 \begin{align}
\beta \mathbb{E}\left[Y^{2\beta}(t)B(\beta,1+\beta;Y(s)/Y(t))\right] \sim\E\left[Y^{\beta}(s)\right]\E\left[Y^{\beta}(t-s)\right]\label{beta-joint}.
 \end{align}
(iii) For fixed $\delta>0$, the asymptotic expansion of $~\mathbb{E}\left[Y^{2\beta}(t+\delta)B(\beta,1+\beta;Y(t)/Y(t+\delta))\right]$ is
\begin{equation}\label{beta-joint-2}
\mathbb{E}\left[Y^{2\beta}(t+\delta)B(\beta,1+\beta;Y(t)/Y(t+\delta))\right]\sim B(\beta,1+\beta) \mathbb{E}\left[Y^{2\beta}(t+\delta)\right].
\end{equation}
 
\begin{proof} (i): First note that, by Stirling's approximation,
 	\begin{equation}\label{stirlings-appx}
 	\mathbb{E}[Y^{\beta}(t)]=\frac{1}{\alpha^{\beta}}\frac{\Gamma(pt+\beta)}{\Gamma(pt)}\sim\left(\frac{pt}{\alpha}\right)^{\beta},\text{ for large }t.
 	\end{equation}
 Since the gamma subordinator $\{Y(t)\}_{t\geq0}$ has stationary and independent increments, it suffices to show that
\begin{equation*}
\lim\limits_{t\rightarrow\infty}\frac{\E[Y^{\beta}(s)Y^{\beta}(t)]}{\E\left[Y^{\beta}(s)(Y(t)-Y(s))^\beta\right]}=1.
\end{equation*}
Since  $\{Y(t)\}_{t\geq0}$ is an $a.s.$ increasing process with $Y(0)=0$,
\begin{align}
&~~~Y(t)-Y(s)\leq Y(t)~~a.s.\nonumber\\
\Rightarrow &~~~\E[Y^\beta(s)(Y(t)-Y(s))^{\beta}]\leq \E[Y^{\beta}(s)Y^\beta(t)]\nonumber\\
\Rightarrow &~~~\frac{\E[Y^{\beta}(s)Y^{\beta}(t)]}{\E\left[Y^{\beta}(s)(Y(t)-Y(s))^\beta\right]}\geq 1.\label{Lemma1oneside}
\end{align}
Now consider
\begin{align}
\frac{\E[Y^{\beta}(s)Y^{\beta}(t)]}{\E\left[Y^{\beta}(s)(Y(t)-Y(s))^\beta\right]}\nonumber&=\frac{\E\left[Y^{\beta}(s)\{Y^{\beta}(t)-(Y(t)-Y(s))^\beta\}\right]}{\E\left[Y^{\beta}(s)(Y(t)-Y(s))^\beta\right]}+1\\
&\leq \frac{\E\left[Y^{\beta}(s)\{Y^{\beta}(t)-(Y^{\beta}(t)-Y^\beta(s))\}\right]}{\E\left[Y^{\beta}(s)(Y(t)-Y(s))^\beta\right]}+1\nonumber ~~ (\text{using \eqref{appnedix-1}})\\
&=\frac{\E\left[Y^{2\beta}(s)\right]}{\E\left[Y^{\beta}(s)(Y(t)-Y(s))^\beta\right]}+1.\label{Lemma1secondside}
\end{align}
From \eqref{Lemma1oneside} and \eqref{Lemma1secondside}, we have that 
\begin{equation*}
1\leq \frac{\E[Y^{\beta}(s)Y^{\beta}(t)]}{\E\left[Y^{\beta}(s)(Y(t)-Y(s))^\beta\right]} \leq \frac{\E\left[Y^{2\beta}(s)\right]}{\E\left[Y^{\beta}(s)(Y(t)-Y(s))^\beta\right]}+1.
\end{equation*}
Taking the limit as $t$ tends to infinity in the above equation and using the fact that $\{Y(t)\}_{t\geq0}$ has stationary and independent increments, we get
\begin{align*}
& 1\leq \lim\limits_{t\rightarrow\infty}\frac{\E[Y^{\beta}(s)Y^{\beta}(t)]}{\E\left[Y^{\beta}(s)(Y(t)-Y(s))^\beta\right]} \leq 1+\lim\limits_{t\rightarrow\infty}\frac{\E\left[Y^{2\beta}(s)\right]}{\E\left[Y^{\beta}(s)\right]\E\left[Y^\beta(t-s)\right]}\\
& 1\leq \lim\limits_{t\rightarrow\infty}\frac{\E[Y^{\beta}(s)Y^{\beta}(t)]}{\E\left[Y^{\beta}(s)(Y(t)-Y(s))^\beta\right]} \leq 1, ~~~ (\text{using Theorem \ref{Theorem-fnbp-dist-properties}(i)})
\end{align*}
which proves Part (i). 

\vspace{.2cm}
\noindent (ii): To prove Part (ii), it suffices to show that (in view of Part (i))
\begin{equation*}
\lim\limits_{t\rightarrow\infty}\frac{\beta \mathbb{E}\left[Y^{2\beta}(t)B(\beta,1+\beta;Y(s)/Y(t))\right]}{\E\left[Y^{\beta}(s)Y^\beta(t)\right]}=1.
\end{equation*}

\noindent Note that 
\begin{align}
B\left(\beta,1+\beta;Y(s)/Y(t)\right)&=\int_{0}^{\frac{Y(s)}{Y(t)}}u^{\beta-1}(1-u)^{\beta}du\nonumber\\
&\leq \int_{0}^{\frac{Y(s)}{Y(t)}}u^{\beta-1}du~~(\text{since }(1-u)^\beta\leq 1)\nonumber\\
&=\frac{Y^\beta(s)}{\beta Y^\beta(t)},\nonumber
\end{align}
which leads to
\begin{equation*}
\lim\limits_{t\rightarrow\infty}\frac{\beta \mathbb{E}\left[Y^{2\beta}(t)B(\beta,1+\beta;Y(s)/Y(t))\right]}{\E\left[Y^{\beta}(s)Y^\beta(t)\right]}\leq 1.
\end{equation*}
On the other hand,
\begin{align}
B\left(\beta,1+\beta;Y(s)/Y(t)\right)&=\int_{0}^{\frac{Y(s)}{Y(t)}}u^{\beta-1}(1-u)^{\beta}du\nonumber\\
&\geq \int_{0}^{\frac{Y(s)}{Y(t)}}u^{\beta-1}(1-u^\beta)du~~(\text{using \eqref{appnedix-1}})\nonumber\\
&=\frac{1}{\beta}\left(\frac{Y^\beta(s)}{Y^\beta(t)}-\frac{Y^{2\beta}(s)}{2Y^{2\beta}(t)}\right).\nonumber
\end{align}
This leads to
\begin{align}
\lim\limits_{t\rightarrow\infty}\frac{\beta \mathbb{E}\left[Y^{2\beta}(t)B(\beta,1+\beta;Y(s)/Y(t))\right]}{\E\left[Y^{\beta}(s)Y^\beta(t)\right]}&\geq \lim\limits_{t\rightarrow\infty}\frac{\beta \mathbb{E}\left[Y^{2\beta}(t)\frac{1}{\beta}\left(\frac{Y^\beta(s)}{Y^\beta(t)}-\frac{Y^{2\beta}(s)}{2Y^{2\beta}(t)}\right)\right]}{\E\left[Y^{\beta}(s)Y^\beta(t)\right]}\nonumber\\
&=\lim\limits_{t\rightarrow\infty}\frac{\mathbb{E}\left[Y^{\beta}(t)Y^\beta(s)-Y^{2\beta}(s)/2\right]}{\E\left[Y^{\beta}(s)Y^\beta(t)\right]}\nonumber\\
&=1-\lim\limits_{t\rightarrow\infty}\frac{\mathbb{E}\left[Y^{2\beta}(s)\right]}{2\E\left[Y^{\beta}(s)\right]\E\left[Y^\beta(t-s)\right]}=1,\nonumber
\end{align}
using Part (i) and Theorem \ref{Theorem-fnbp-dist-properties}(i). This completes the proof of Part (ii).

\vspace{.2cm}
\noindent (iii): It is known that if $X\sim G(\alpha,p_1)$ and $Y\sim G(\alpha,p_2)$ are two independent gamma random variables, then $U=(X+Y)$ and $V=X/(X+Y)$ are independent $G(\alpha,p_1+p_2)$ and Beta$(p_1,p_2)$ variables. Since $\{Y(t)\}_{t\geq0}$ is a gamma subordinator, $Y(t+\delta)\stackrel{d}{=}Y(t+\delta)-Y(t)+Y(t)\stackrel{d}{=}Y^{\ast}(\delta)+Y(t)$, where $Y^{\ast}(\delta)$ and $Y(t)$ are independent and hence $\frac{Y(t)}{Y^{\ast}(\delta)+Y(t)}$ and $Y^{\ast}(\delta)+Y(t)$ are independent. In other words, $\frac{Y(t)}{Y(t+\delta)}$ and $Y(t+\delta)$ are independent. Therefore,
\begin{align*}
\lim\limits_{t\rightarrow\infty}&\frac{\mathbb{E}\left[Y^{2\beta}(t+\delta)B(\beta,1+\beta;Y(t)/Y(t+\delta))\right]}{ B(\beta,1+\beta) \mathbb{E}\left[Y^{2\beta}(t+\delta)\right]}\\&=\lim\limits_{t\rightarrow\infty}\frac{\mathbb{E}\left[Y^{2\beta}(t+\delta)\right]\mathbb{E}\left[B(\beta,1+\beta;Y(t)/Y(t+\delta))\right]}{ B(\beta,1+\beta) \mathbb{E}\left[Y^{2\beta}(t+\delta)\right]}\\&=\lim\limits_{t\rightarrow\infty}\frac{\mathbb{E}\left[B(\beta,1+\beta;Y(t)/Y(t+\delta))\right]}{ B(\beta,1+\beta)}.
\end{align*}
As $t\rightarrow\infty,~Y(t)\rightarrow\infty~a.s.,~\frac{Y(t)}{Y(t+\delta)}\rightarrow 1~a.s.$ and hence $B(\beta,1+\beta;Y(t)/Y(t+\delta))\rightarrow B(\beta,1+\beta)~a.s.$ Also, $B(\beta,1+\beta;Y(t)/Y(t+\delta))(\omega)\leq B(\beta,1+\beta)$ for all $\omega$ and $t$ and hence uniformly bounded. Therefore, 
$$\lim\limits_{t\rightarrow\infty}\mathbb{E}\left[B(\beta,1+\beta;Y(t)/Y(t+\delta))\right]= B(\beta,1+\beta),$$
which proves the result.
 \end{proof}
 \end{lemma}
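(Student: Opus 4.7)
The plan is to establish the three asymptotic equivalences in sequence, with Part~(i) doing the main structural work (exploiting the stationary independent increments of the gamma subordinator), Part~(ii) then reducing to Part~(i) via a two-sided bound on the incomplete beta function, and Part~(iii) requiring a different idea entirely, based on the classical beta-gamma factorization of independent gamma variables.

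For Part~(i), I would begin from $Y(t)=Y(s)+(Y(t)-Y(s))$, in which the two summands are independent and $Y(t)-Y(s)\stackrel{d}{=}Y(t-s)$. The inequality \eqref{appnedix-1} applied with $a=Y(t)$ and $b=Y(s)$, together with the trivial bound $Y(t)\ge Y(t)-Y(s)$, yields
\[
(Y(t)-Y(s))^{\beta}\;\le\;Y^{\beta}(t)\;\le\;Y^{\beta}(s)+(Y(t)-Y(s))^{\beta}.
\]
Multiplying by $Y^{\beta}(s)$, taking expectations, and factoring across the independent increments on the extreme terms gives
\[
\E[Y^{\beta}(s)]\E[Y^{\beta}(t-s)]\;\le\;\E[Y^{\beta}(s)Y^{\beta}(t)]\;\le\;\E[Y^{2\beta}(s)]+\E[Y^{\beta}(s)]\E[Y^{\beta}(t-s)].
\]
Since $\E[Y^{\beta}(t-s)]\to\infty$ by the Stirling asymptotic \eqref{stirlings-appx}, while $\E[Y^{2\beta}(s)]$ is a fixed constant, dividing both sides by $\E[Y^{\beta}(s)]\E[Y^{\beta}(t-s)]$ pinches the ratio to $1$.

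For Part~(ii), in view of Part~(i) it suffices to show that $\beta\E[Y^{2\beta}(t)B(\beta,1+\beta;Y(s)/Y(t))]$ is asymptotically equivalent to $\E[Y^{\beta}(s)Y^{\beta}(t)]$. My plan is to bracket the incomplete beta function on $(0,x)$ using $(1-u)^{\beta}\le 1$ from above and $(1-u)^{\beta}\ge 1-u^{\beta}$ from below (the latter being a direct instance of \eqref{appnedix-1} with $a=1$, $b=u$), giving $x^{\beta}/\beta - x^{2\beta}/(2\beta)\le B(\beta,1+\beta;x)\le x^{\beta}/\beta$. Substituting $x=Y(s)/Y(t)$, multiplying by $\beta Y^{2\beta}(t)$, and taking expectations produces a squeeze in which the gap between the upper and lower bounds is exactly $\E[Y^{2\beta}(s)]/2$, a constant that is dwarfed by $\E[Y^{\beta}(s)Y^{\beta}(t)]\to\infty$.

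Part~(iii) calls for a different approach because, with $\delta$ fixed, $Y(t)/Y(t+\delta)\to 1$ almost surely. I would invoke the beta-gamma algebra: decomposing $Y(t+\delta)=Y(t)+(Y(t+\delta)-Y(t))$ as a sum of independent gamma variables, the ratio $Y(t)/Y(t+\delta)$ is independent of the sum $Y(t+\delta)$. This factorizes the expectation and reduces the claim to $\E[B(\beta,1+\beta;Y(t)/Y(t+\delta))]\to B(\beta,1+\beta)$, which follows by dominated convergence since the integrand is bounded uniformly in $t$ and $\omega$ by $B(\beta,1+\beta)$. I expect the main obstacle to be locating the right pair of sharp inequalities in Part~(i); once the residual error $\E[Y^{2\beta}(s)]$ is recognized as a fixed constant and hence asymptotically negligible against the diverging denominator, the rest of the lemma falls into place routinely.
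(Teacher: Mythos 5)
Your proposal is correct and follows essentially the same route as the paper: the same two-sided bound $(Y(t)-Y(s))^{\beta}\le Y^{\beta}(t)\le Y^{\beta}(s)+(Y(t)-Y(s))^{\beta}$ from \eqref{appnedix-1} plus independence of increments for Part (i), the same squeeze $x^{\beta}/\beta-x^{2\beta}/(2\beta)\le B(\beta,1+\beta;x)\le x^{\beta}/\beta$ for Part (ii), and the same beta--gamma independence followed by bounded convergence for Part (iii). The only difference is cosmetic: you normalize by $\E[Y^{\beta}(s)]\E[Y^{\beta}(t-s)]$ directly rather than by $\E[Y^{\beta}(s)(Y(t)-Y(s))^{\beta}]$ as the paper does.
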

  We are now ready to prove the main result of this section.
 \begin{theorem}
  The FNBP $\{Q_{\beta}(t)\}_{t\geq 0}$ has the LRD property.
  \begin{proof}\noindent Consider the last term of $\text{Cov}[Q_{\beta}(s),Q_{\beta}(t)]$ given in Theorem \ref{Theorem-fnbp-dist-properties}(iii), namely,
  \begin{equation*}
   \beta q^{2} \mathbb{E}\Big[Y^{2\beta}(t)B(\beta,1+\beta;Y(s)/Y(t))\Big].
  \end{equation*}
  Using Lemma \ref{Lemma-asym}(ii), we get for large $t$,
 \begin{align}
    q^{2}\beta \mathbb{E}[Y^{2\beta}(t)B(\beta,1+\beta;Y(s)/Y(t))
    &\sim q^{2}\E[Y^{\beta}(s)]\E[Y^{\beta}(t-s)]\label{autocovariance-last-summand-1}.
 \end{align}
 \noindent By \eqref{stirlings-appx} and \eqref{autocovariance-last-summand-1}, Theorem \ref{Theorem-fnbp-dist-properties}(iii) becomes for large $t$,
 \begin{align}
  \text{Cov}[Q_{\beta}(s),Q_{\beta}(t)]&\sim q\mathbb{E}[Y^{\beta}(s)]+d\mathbb{E}[Y^{2\beta}(s)]\nonumber\\
  &~~~~~~~-q^{2}\mathbb{E}[Y^{\beta}(s)]\left(\frac{pt}{\alpha}\right)^{\beta}+q^{2}\mathbb{E}[Y^{\beta}(s)]\left(\frac{p(t-s)}{\alpha}\right)^{\beta}\nonumber\\
  &=q\mathbb{E}[Y^{\beta}(s)]+d\mathbb{E}[Y^{2\beta}(s)]-q^{2}\mathbb{E}[Y^{\beta}(s)]\left(\left(\frac{pt}{\alpha}\right)^{\beta}-\left(\frac{pt-ps}{\alpha}\right)^{\beta}\right)\nonumber\\
  &\sim q\mathbb{E}[Y^{\beta}(s)]+d\mathbb{E}[Y^{2\beta}(s)],\label{covariance-large-t}
 \end{align}
 since $t^{\beta}-(t-s)^{\beta} \sim \beta s t^{\beta-1}$ for large $t.$
 
\noindent Similarly, from Theorem \ref{Theorem-fnbp-dist-properties}(ii) and \eqref{stirlings-appx}, we have
 \begin{align}
  \text{Var}[Q_{\beta}(t)]&\sim q\left(\frac{pt}{\alpha}\right)^{\beta}-q^{2}\left(\frac{pt}{\alpha}\right)^{2\beta}+2d\left(\frac{pt}{\alpha}\right)^{2\beta}\nonumber\\
  &=t^{2\beta}\left(q\left(\frac{p}{t\alpha}\right)^{\beta}-q^{2}\left(\frac{p}{\alpha}\right)^{2\beta}+2d\left(\frac{p}{\alpha}\right)^{2\beta}\right)\nonumber\\
  &\sim  t^{2\beta}\left(\frac{p}{\alpha}\right)^{2\beta}\left(2d-q^{2}\right)\nonumber\\&=t^{2\beta}d_{1},\label{variance-large-t}
 \end{align}
 where $d_{1}=\left(p/\alpha\right)^{2\beta}R$. 
 \noindent Thus, from \eqref{covariance-large-t} and \eqref{variance-large-t}, the correlation between $Q_{\beta}(s)$ and $Q_{\beta}(t)$ for large $t>s$, is
   \begin{align}
  \text{Corr}[Q_{\beta}(s),Q_{\beta}(t)]&\sim\frac{q\mathbb{E}[Y^{\beta}(s)]+d\mathbb{E}[Y^{2\beta}(s)]}{\sqrt{t^{2\beta}d_{1}}\sqrt{\text{Var}[Q_{\beta}(s)]}}\nonumber= t^{-\beta}\left(\frac{q\mathbb{E}[Y^{\beta}(s)]+d\mathbb{E}[Y^{2\beta}(s)]}{\sqrt{d_{1}\text{Var}[Q_{\beta}(s)]}}\right),\nonumber
 \end{align}
 \noindent which decays like the power law $t^{-\beta},~0<\beta<1$ (see Definition \ref{LRD-definition}). Hence, the FNBP exhibits the LRD property.\end{proof}
 \end{theorem}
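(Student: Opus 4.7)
The plan is to combine Theorem \ref{Theorem-fnbp-dist-properties}(iii) with the asymptotic estimate in Lemma \ref{Lemma-asym}(ii), and then compare against the variance growth from Theorem \ref{Theorem-fnbp-dist-properties}(ii), in order to extract the decay rate of $\text{Corr}[Q_{\beta}(s),Q_{\beta}(t)]$ as $t\to\infty$ with $s$ fixed. Of the four summands in Theorem \ref{Theorem-fnbp-dist-properties}(iii), only the last two depend on $t$; everything else contributes a $t$-independent quantity.

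First I would invoke Lemma \ref{Lemma-asym}(ii) to replace $q^{2}\beta\,\mathbb{E}[Y^{2\beta}(t)B(\beta,1+\beta;Y(s)/Y(t))]$ by its asymptotic equivalent $q^{2}\E[Y^{\beta}(s)]\E[Y^{\beta}(t-s)]$. The two surviving $t$-dependent contributions then combine into
$$-q^{2}\E[Y^{\beta}(s)]\Big\{\E[Y^{\beta}(t)]-\E[Y^{\beta}(t-s)]\Big\}.$$
Applying Stirling's approximation \eqref{stirlings-appx}, this bracket behaves like $(p/\alpha)^{\beta}\{t^{\beta}-(t-s)^{\beta}\}\sim\beta s(p/\alpha)^{\beta}t^{\beta-1}$, which vanishes as $t\to\infty$ since $\beta<1$. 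Consequently $\text{Cov}[Q_{\beta}(s),Q_{\beta}(t)]$ converges, as $t\to\infty$, to the positive constant $q\E[Y^{\beta}(s)]+d\E[Y^{2\beta}(s)]$.

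Next, from Theorem \ref{Theorem-fnbp-dist-properties}(ii) and Stirling's approximation, the dominant $t^{2\beta}$ contribution yields $\text{Var}[Q_{\beta}(t)]\sim d_{1}t^{2\beta}$ with $d_{1}=(p/\alpha)^{2\beta}(2d-q^{2})>0$; positivity of $2d-q^{2}=R$ is forced by \eqref{alternative-fppvar}, since otherwise $\text{Var}[N_{\beta}(t,\lambda)]$ would become negative for large $t$. Dividing the constant covariance limit by $\sqrt{\text{Var}[Q_{\beta}(s)]}\sqrt{\text{Var}[Q_{\beta}(t)]}$ then yields $\text{Corr}[Q_{\beta}(s),Q_{\beta}(t)]\sim c(s)t^{-\beta}$ for some $c(s)>0$, and since $\beta\in(0,1)$, Definition \ref{LRD-definition} delivers the LRD property.

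The main obstacle is the delicate cancellation in the covariance: each of the two $t$-dependent terms is individually of order $t^{\beta}$, and the LRD conclusion depends critically on their leading coefficients matching so that their sum collapses to the $O(t^{\beta-1})$ remainder above. Lemma \ref{Lemma-asym}(ii) is exactly the tool that enforces this matching; without the sharp equivalent $\E[Y^{\beta}(s)]\E[Y^{\beta}(t-s)]$ (rather than a weaker two-sided bound), one could not even conclude that the covariance stays bounded, let alone pins down the precise power-law decay $t^{-\beta}$.
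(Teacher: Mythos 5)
Your proposal is correct and follows essentially the same route as the paper: apply Lemma \ref{Lemma-asym}(ii) to the last term of the covariance in Theorem \ref{Theorem-fnbp-dist-properties}(iii), observe via \eqref{stirlings-appx} that the two $t$-dependent terms cancel to order $t^{\beta-1}$ so the covariance tends to $q\E[Y^{\beta}(s)]+d\E[Y^{2\beta}(s)]$, and divide by $\mbox{Var}[Q_{\beta}(t)]\sim d_{1}t^{2\beta}$ to obtain the $t^{-\beta}$ decay. Your added remark that $R=2d-q^{2}>0$ (needed for $d_{1}>0$) follows from \eqref{alternative-fppvar} is a useful observation the paper leaves implicit.
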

\noindent Finally, we show that the FNBN $\{Q_{\beta}^{\delta}(t)\}_{t\geq0}$ has the SRD property.
 
\begin{theorem}
 The FNBN $\{Q_{\beta}^{\delta}(t)\}_{t\geq0}$ exhibits the SRD property. 
 \end{theorem}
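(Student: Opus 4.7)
The plan is to mimic the FPN argument but with $Y^{\beta}(\cdot)$ playing the role of $(\cdot)^{\beta}$, exploiting the three asymptotic identities in Lemma \ref{Lemma-asym}. First I would write
\begin{align*}
\text{Cov}[Q_{\beta}^{\delta}(s),Q_{\beta}^{\delta}(t)]
&=\text{Cov}[Q_{\beta}(s+\delta),Q_{\beta}(t+\delta)]+\text{Cov}[Q_{\beta}(s),Q_{\beta}(t)]\\
&\qquad-\text{Cov}[Q_{\beta}(s+\delta),Q_{\beta}(t)]-\text{Cov}[Q_{\beta}(s),Q_{\beta}(t+\delta)]
\end{align*}
and substitute Theorem \ref{Theorem-fnbp-dist-properties}(iii) into each of the four FNBP covariances. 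The summands $q\E[Y^{\beta}(\cdot_s)]+d\E[Y^{2\beta}(\cdot_s)]$ depending only on the first argument cancel identically in this second-difference. Applying Lemma \ref{Lemma-asym}(ii) to each remaining $\beta \E[Y^{2\beta}(\cdot)B(\beta,1+\beta;Y(\cdot)/Y(\cdot))]$ term, and using Stirling's approximation \eqref{stirlings-appx}, reduces the expression to a combination of the form $q^2(p/\alpha)^{\beta}\bigl\{\E[Y^{\beta}(s+\delta)]\,\Delta_{1}+\E[Y^{\beta}(s)]\,\Delta_{2}\bigr\}$, where $\Delta_{1}$ and $\Delta_{2}$ are second differences of $x\mapsto x^{\beta}$ at $t$ with step sizes depending on $s$ and $\delta$.

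The crucial observation is that the first-order Taylor coefficients in these second differences cancel, so both $\Delta_{1}$ and $\Delta_{2}$ are of order $t^{\beta-2}$. A straightforward Taylor expansion of $(t+x)^{\beta}$ at large $t$ (keeping two orders) then yields
\begin{equation*}
\text{Cov}[Q_{\beta}^{\delta}(s),Q_{\beta}^{\delta}(t)]\sim K_{1}\,t^{\beta-2},
\end{equation*}
for an explicit constant $K_{1}=K_{1}(s,\delta,\beta,p,\alpha,\lambda)$ analogous to the constant $K$ appearing in \eqref{covapproxA}.

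For the denominator, I would expand $\text{Var}[Q_{\beta}^{\delta}(t)]=\text{Var}[Q_{\beta}(t+\delta)]+\text{Var}[Q_{\beta}(t)]-2\text{Cov}[Q_{\beta}(t),Q_{\beta}(t+\delta)]$ using Theorem \ref{Theorem-fnbp-dist-properties}(ii)--(iii) and Lemma \ref{Lemma-asym}(iii) (this is exactly the case $s=t$ which the first part of the lemma does not cover). The $B$-term contributes $2q^{2}\beta B(\beta,1+\beta)\E[Y^{2\beta}(t+\delta)]$ asymptotically, and the magical cancellation uses the identity $d=\beta q^{2}B(\beta,1+\beta)$ from the statement preceding \eqref{integral-bound}: the coefficient of $\E[Y^{2\beta}(t+\delta)]$ becomes $2d-2q^{2}\beta B(\beta,1+\beta)=0$. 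The surviving piece is
\begin{equation*}
\text{Var}[Q_{\beta}^{\delta}(t)]\sim q\bigl(\E[Y^{\beta}(t+\delta)]-\E[Y^{\beta}(t)]\bigr)\sim q(p/\alpha)^{\beta}\beta\delta\,t^{\beta-1},
\end{equation*}
which mirrors the $\beta\delta q t^{\beta-1}$ behaviour in \eqref{varC}.

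Combining the two estimates gives
\begin{equation*}
\text{Corr}[Q_{\beta}^{\delta}(s),Q_{\beta}^{\delta}(t)]\sim \frac{K_{1}\,t^{\beta-2}}{\sqrt{\text{Var}[Q_{\beta}^{\delta}(s)]}\,\sqrt{c_{0}\,t^{\beta-1}}}=K_{2}\,t^{-(3-\beta)/2},
\end{equation*}
and since $(3-\beta)/2\in(1,3/2)\subset(1,2)$ for every $\beta\in(0,1)$, Definition \ref{LRD-definition} yields the SRD property. The main obstacle I anticipate is the bookkeeping of the two-term Taylor cancellations for the second-difference of $x\mapsto x^{\beta}$ together with the simultaneous control of the error terms produced by Lemma \ref{Lemma-asym}(ii)--(iii); one has to verify that these errors are genuinely of smaller order than $t^{\beta-2}$ so that the leading asymptotics survive, and that the $d=\beta q^{2}B(\beta,1+\beta)$ cancellation is exact rather than merely asymptotic, as the variance expansion would otherwise be dominated by a $t^{2\beta}$ term and the correlation would not decay fast enough to give SRD.
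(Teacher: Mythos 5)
Your proposal follows essentially the same route as the paper's own proof: the second-difference decomposition of the covariance with cancellation of the $s$-only terms, Lemma \ref{Lemma-asym}(ii) plus a two-term Taylor expansion to extract the $t^{\beta-2}$ rate, Lemma \ref{Lemma-asym}(iii) together with the exact identity $d=\beta q^{2}B(\beta,1+\beta)$ to kill the $\E[Y^{2\beta}(t+\delta)]$ contribution in the variance, and the resulting decay rate $t^{-(3-\beta)/2}\in$ SRD range. This matches the paper's argument step for step, so the proposal is correct.
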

 
\begin{proof}
Let $s,\delta\geq0$ be fixed and $s+\delta\leq t$. By Theorem \ref{Theorem-fnbp-dist-properties}(i), we have for large $t$,
 \begin{align}
   \mathbb{E}[Q_{\beta}^{\delta}(t)] &=q(\E[Y^{\beta}(t+\delta)]-\E[Y^{\beta}(t)]) \nonumber \\
	     &\sim q\left(\frac{pt}{\alpha}\right)^{\beta}\left[\left(1+\delta/t\right)^{\beta}-1\right] \label{autocovariance-increments-1}\\
&		\sim q\beta \delta \left(\frac{p}{\alpha}\right)^{\beta} t^{\beta-1}.\label{autocovariance-increments-2} 
 \end{align}
Now using Theorem \ref{Theorem-fnbp-dist-properties}(iii), we get
 \begin{align}
   \mathbb{E}[Q_{\beta}^{\delta}(s)Q_{\beta}^{\delta}(t)]&=\mathbb{E}[Q_{\beta}(s+\delta)Q_{\beta}(t+\delta)]-\mathbb{E}[Q_{\beta}(s+\delta)Q_{\beta}(t)]-\mathbb{E}[Q_{\beta}(s)Q_{\beta}(t+\delta)]\nonumber \\
     &~~~~~~~~~~+\mathbb{E}[Q_{\beta}(s)Q_{\beta}(t)]\nonumber\\
   &=\beta q^{2}\Big( \mathbb{E}[Y^{2\beta}(t+\delta)B(\beta,1+\beta;Y(s+\delta)/Y(t+\delta))]\nonumber\\
   &~~~~~~~~~~~- \mathbb{E}[Y^{2\beta}(t)B(\beta,1+\beta;Y(s+\delta)/Y(t))]\nonumber\\
     &~~~~~~~~~~~- \mathbb{E}[Y^{2\beta}(t+\delta)B(\beta,1+\beta;Y(s)/Y(t+\delta))]\nonumber\\
       &~~~~~~~~~~~+ \mathbb{E}[Y^{2\beta}(t)B(\beta,1+\beta;Y(s)/Y(t))]\bigg)\nonumber\\
     &\sim q^{2}\big(\mathbb{E}[Y^{\beta}(s+\delta)]\mathbb{E}[Y^{\beta}(t-s)]-\mathbb{E}[Y^{\beta}(s+\delta)]\mathbb{E}[Y^{\beta}(t-s-\delta)]\nonumber\\
   &~~~~~~-\mathbb{E}[Y^{\beta}(s)]\mathbb{E}[Y^{\beta}(t-s+\delta)]+\mathbb{E}[Y^{\beta}(s)]\mathbb{E}[Y^{\beta}(t-s)]\big)~(\text{using }\eqref{beta-joint})\nonumber\\
   &\sim q^{2}\left(\frac{pt}{\alpha}\right)^{\beta}\bigg[\mathbb{E}[Y^{\beta}(s+\delta)]\left(1-\frac{s}{t}\right)^{\beta}-\mathbb{E}[Y^{\beta}(s+\delta)]\left(1-\frac{s+\delta}{t}\right)^{\beta}\nonumber\\
   &~~~~~~~~~~-\mathbb{E}[Y^{\beta}(s)]\left(1-\frac{s-\delta}{t}\right)^{\beta}+\mathbb{E}[Y^{\beta}(s)]\left(1-\frac{s}{t}\right)^{\beta}\bigg],\label{autocovariance-increments}
   \end{align}
 using \eqref{stirlings-appx}. From \eqref{autocovariance-increments-1}, we have
 \begin{equation*}
 \E[Q_{\beta}^{\delta}(s)]\E[Q_{\beta}^{\delta}(t)]\sim q^{2}\left(\frac{pt}{\alpha}\right)^{\beta}\left(\E[Y^{\beta}(s+\delta)]-\E[Y^{\beta}(s)]\right)\left(\left(1+\frac{\delta}{t}\right)^{\beta}-1\right).
 \end{equation*}
  Using $(1\pm s/t)^{\beta}\sim 1\pm\beta s/t+\beta(\beta-1)s^{2}/2t^{2},$ for large $t$, in \eqref{autocovariance-increments} and after some routine calculations, we get
     \begin{align}
     \text{Cov}[Q_{\beta}^{\delta}(s),Q_{\beta}^{\delta}(t)]&\sim q^{2}\left(\frac{pt}{\alpha}\right)^{\beta}\frac{\beta(\beta-1)}{2t^{2}}\Big[(s^{2}-(s+\delta)^{2}-\delta^{2})\E[Y^{\beta}(s+\delta)]\nonumber\\
     &~~~~~~~~~+(s^{2}+\delta^{2}-(s-\delta)^{2})\E[Y^{\beta}(s)]\Big]\nonumber\\
     &=t^{\beta-2}q^{2}\delta\left(\frac{p}{\alpha}\right)^{\beta}\beta(1-\beta)\Big((s+\delta)\mathbb{E}[Y^{\beta}(s+\delta)]-s\mathbb{E}[Y^{\beta}(s)]\Big).\label{fnbp-increments-cov}
     \end{align}
 Using $\E[Q^{2}_{\beta}(t)]=q\E[Y^{\beta}(t)]+2d\E[Y^{2\beta}(t)]$ (see Theorem \ref{Theorem-fnbp-dist-properties}(ii)), we get
   \begin{align}
   \E[(Q_{\beta}^{\delta}(t))^{2}]&=\E[Q_{\beta}^{2}(t+\delta)]+\E[Q_{\beta}^{2}(t)]-2\E[Q_{\beta}(t+\delta)Q_{\beta}(t)]\nonumber\\
   &=q\left(\E[Y^{\beta}(t+\delta)]-\E[Y^{\beta}(t)]\right)+2d\left(\E[Y^{2\beta}(t)]+\E[Y^{2\beta}(t+\delta)]\right)\nonumber\\
   &~~~~~-2d\E[Y^{2\beta}(t)]-2\beta q^{2}\E[Y^{2\beta}(t+\delta)B(\beta,1+\beta;Y(t)/Y(t+\delta))]\nonumber \\ 
   &\sim q\left(\E[Y^{\beta}(t+\delta)]-\E[Y^{\beta}(t)]\right)+2d\left(\E[Y^{2\beta}(t)]+\E[Y^{2\beta}(t+\delta)]\right)\nonumber\\
     &~~~~~-2d\big(\E[Y^{2\beta}(t)]+\E[Y^{2\beta}(t+\delta)]\big)~~(\text{using } \eqref{beta-joint-2})\nonumber \\
			&=q\left(\E[Y^{\beta}(t+\delta)]-\E[Y^{\beta}(t)]\right)\nonumber\\
       &=\E[Q^\delta_{\beta}(t)].  \nonumber 
 \end{align}
 From \eqref{autocovariance-increments-2}, we have
 \begin{align}
    \text{Var}[Q_{\beta}^{\delta}(t)]&\sim t^{\beta-1}\beta\delta q\left(\frac{p}{\alpha}\right)^{\beta}-\left(\frac{\beta\delta q p^{\beta}}{\alpha^{\beta}}\right)^{2}t^{2(\beta-1)}\nonumber\\
    &\sim t^{\beta-1}\beta \delta q\left(\frac{p}{\alpha}\right)^{\beta}.\label{fnbp-increments-var}
   \shortintertext{Thus, using \eqref{fnbp-increments-cov} and \eqref{fnbp-increments-var}, we have for fixed $s$ and large $t$,}
   \text{Corr}[Q_{\beta}^{\delta}(s),Q_{\beta}^{\delta}(t)]&\sim t^{-(3-\beta)/2}\left(\frac{q^{2}\beta(1-\beta)\delta\left(\frac{p}{\alpha}\right)^{\beta}\left((s+\delta)\mathbb{E}[Y^{\beta}(s+\delta)]-s\mathbb{E}[Y^{\beta}(s)]\right)}{\sqrt{\text{Var}[Q^{\delta}_{\beta}(s)]\beta \delta q\left(\frac{p}{\alpha}\right)^{\beta}}}\right).\nonumber
   \end{align}
   Since $(3-\beta)/2\in(1,1.5)$, for $0< \beta <1$, the increments of the FNBP possess the SRD property.
   \end{proof}
 
\begin{remark}
 Since the FPP is a non-stationary process, the FNBP is also a non-stationary process. Also, as seen earlier, the FNBP has the long-range dependence property, its increments are correlated and exhibit the short-range dependence property. Such stochastic models have wide applicability in many different areas, such as economics, finance, physics and engineering sciences.
 \end{remark}

\acks 
\noindent The authors thank the referee for some critical and encouraging comments, especially regarding the Definition 4. A part of this work was done when the second author was visiting the Department of Statistics and Probability, Michigan State
University, during Summer-2015. The authors thank Prof. Hira L. Koul for some helpful comments.

%
%
%
%


\begin{thebibliography}{99}
\footnotesize





\def\cprime{$'$}

\bibitem{BegOrs09}
{\sc Beghin, L. and Orsingher, E.} (2009).
\newblock Fractional {P}oisson processes and related planar random motions.
\newblock {\em Electron. J. Probab.}, {\bf 61}, 1790--1827.

\bibitem{biardapp}
{\sc Biard, R. and Saussereau, B.} (2014).
\newblock Fractional {P}oisson process: long-range dependence and applications
  in ruin theory.
\newblock {\em J. Appl. Probab.}, {\bf 51}, 727--740.

\bibitem{Ding1993}
{\sc Ding, Z., Granger, C. W. J. and Engle, R. F.} (1993).
\newblock A long memory property of stock market returns and a new model.
\newblock {\em J. Empirical Finance}, {\bf 1}, 83 -- 106.

\bibitem{DouTaqqu2003}
{\sc Doukhan, P., Oppenheim, G. and Taqqu, M.S. (Eds.)} (2003).
\newblock {\em Theory and applications of long-range dependence}.
\newblock Birkh\"auser Boston, Inc., Boston.

\bibitem{ovi-lrd}
{\sc D'Ovidio, M. and Nane, E.} (2014)
\newblock Time dependent random fields on spherical non-homogeneous surfaces.
\newblock {\em Stochastic Process. Appl.}, {\bf 124}, 2098--2131.

\bibitem{fell}
{\sc Feller, W.} (1971).
\newblock {\em An Introduction to Probability Theory and its Applications.
  {V}ol. {II}.}
\newblock Second edition. John Wiley \& Sons Inc., New York.

\bibitem{heyde97}
{\sc Heyde, C. C. and Yang, Y.} (1997).
\newblock On defining long-range dependence.
\newblock {\em J. Appl. Probab.}, {\bf 34}, 939--944.

\bibitem{karag2004}
{\sc Karagiannis, T., Molle, M. and Faloutsos, M.} (2004).
\newblock Long-range dependence ten years of Internet traffic modeling.
\newblock {\em Internet Computing, IEEE}, {\bf 8}, 57--64.

\bibitem{kozubo}
{\sc Kozubowski, T. J. and  Podg{\'o}rski}, K. (2009).
\newblock Distributional properties of the negative binomial {L}\'evy process.
\newblock {\em Probab. Math. Statist.}, {\bf 29}, 43--71.

\bibitem{lask}
{\sc Laskin, N.} (2003).
 \newblock Fractional {P}oisson process.
\newblock {\em Commun. Nonlinear Sci. Numer. Simul.}, {\bf 8}, 201--213.

\bibitem{LRD2014}
{\sc Leonenko, N. N., Meerschaert, M. M., Schilling, R. L. and Sikorskii, A.} (2014).
\newblock Correlation structure of time-changed L\'evy processes.
\newblock {\em Commun. Appl. Ind. Math.}, {\bf 6}, e-483.

\bibitem{main}
{\sc Mainardi, F., Gorenflo, R. and Scalas, E.} (2004).
\newblock A fractional generalization of the {P}oisson processes.
\newblock {\em Vietnam J. Math.}, {\bf 32}, 53--64.

\bibitem{mnv}
{\sc Meerschaert, M. M., Nane, E. and Vellaisamy, P.} (2011).
\newblock The fractional {P}oisson process and the inverse stable subordinator.
\newblock {\em Electron. J. Probab.}, {\bf 16}, 1600--1620.

\bibitem{Pagan1996}
{\sc Pagan, A.} (1996).
\newblock The econometrics of financial markets.
\newblock {\em J. Empirical Finance}, {\bf 3}, 15 -- 102.

\bibitem{climate2006}
{\sc Varotsos, C. and Kirk-Davidoff, D.} (2006).
\newblock Long-memory processes in ozone and temperature variations at the
  region 60$\,^{\circ}$s-60$\,^{\circ}$n.
\newblock {\em Atmos. Chem. Phys.}, {\bf 6}, 4093--4100.

\bibitem{fnbpfp}
{\sc Vellaisamy, P. and Maheshwari, A.} (2015).
\newblock Fractional negative binomial and Polya processes.
\newblock  Accepted for publication in {\em Probab. Math. Statist.}


\end{thebibliography}
\end{document}